\let\cite=\citet
\begin{document}

\newcommand\footnotemarkfromtitle[1]{%
\renewcommand{\thefootnote}{\fnsymbol{footnote}}%
\footnotemark[#1]%
\renewcommand{\thefootnote}{\arabic{footnote}}}

\title{Finite element quasi-interpolation and best approximation\footnotemark[1]}
\author{ Alexandre Ern\footnotemark[2] \and Jean-Luc Guermond\footnotemark[3]}

\date{Draft version \today}

\maketitle

\renewcommand{\thefootnote}{\fnsymbol{footnote}} \footnotetext[1]{
  This material is based upon work supported in part by the National
  Science Foundation grant DMS-1217262, by the Air
  Force Office of Scientific Research, USAF, under grant/contract
  number FA9550-15-1-0257 and by the Army Research Office under grant
  number W911NF-15-1-0517,
Draft
  version, \today}
\footnotetext[2]{Universit\'e Paris-Est, CERMICS (ENPC),
  77455 Marne-la-Vall\'ee cedex 2, France.}
\footnotetext[3]{Department of Mathematics, Texas
  A\&M University 3368 TAMU, College Station, TX 77843, USA.}

\renewcommand{\thefootnote}{\arabic{footnote}}

\begin{abstract} 
  This paper introduces a quasi-interpolation operator for scalar- and
  vector-valued finite element spaces constructed on affine,
  shape-regular meshes with some continuity across mesh interfaces.
  This operator gives optimal estimates of the best approximation
    error in any $L^p$-norm assuming regularity in the fractional
    Sobolev spaces $W^{r,p}$, where $p\in [1,\infty]$ and the smoothness index $r$ can be
    arbitrarily close to zero.  The operator is stable in $L^1$,
  leaves the corresponding finite element space point-wise invariant,
  and can be modified to handle homogeneous boundary conditions.  The theory is
  illustrated on $H^1$-, $\bH(\text{curl})$- and
  $\bH(\text{div})$-conforming spaces.
\end{abstract}

\begin{keywords}
Quasi-interpolation, Finite Elements, Best Approximation
\end{keywords}

\begin{AMS}
 65D05, 65N30, 41A65
\end{AMS}

\pagestyle{myheadings} \thispagestyle{plain} 
\markboth{A. ERN, J.L. GUERMOND}{Finite element quasi-interpolations and best approximation}

\section{Introduction} 
Consider a shape-regular sequence of affine meshes $\famTh$
approximating a bounded polyhedral Lipschitz domain $\Dom$ in $\Real^d$, and a
sequence of finite element spaces $(P(\calT_h))_{h>0} $ based on this
mesh sequence, composed of either scalar- or vector-valued functions,
and conforming in some functional space $W$
where some continuity across mesh interfaces is
enforced. Estimates of the best-approximation error in
$P(\calT_h)$ are invariably invoked in the convergence analysis of
finite element approximations. When the exact solution is smooth
enough, the canonical interpolation operator in $P(\calT_h)$ can be
used to obtain decay estimates of the best-approximation error in
terms of the mesh-size. However, in many practical situations, the
exact solution only sits in a Sobolev space $W^{r,p}(\Dom)$ (for
some $p\in [1,\infty]$) where the smoothness index $r$ can be close
to zero. In this case, an alternative quasi-interpolation operator
must be invoked to estimate the best-approximation error.

The aim of this paper is the construction of a quasi-interpolation
operator that is stable in $L^1$, leaves $P(\calT_h)$
point-wise invariant, and approximates (quasi-)locally and optimally
functions in $W^{r,p}(\Dom)$, for all $p\in [1,\infty]$ and $r$
arbitrarily close to zero. Moreover, the construction can be modified
to enforce homogeneous boundary conditions that are legitimate in
$W$. The main examples we have in mind are $H^1$-,
$\bH(\text{curl})$-, and $\bH(\text{div})$-conforming finite element
spaces. Let us emphasize that, for vector-valued elements, the present
construction is not a substitute to the notion of commuting bounded
cochain projection introduced in the framework of Finite Element
Exterior Calculus, but instead a vital complement to it.  Indeed, as
shown in \cite[Theorem~5.6]{Arnold_Falk_Whinter_2006}, the
approximation error for the commuting projection is bounded by the
best-approximation error in $P(\calT_h)$, up to a uniform constant.
Therefore, the present quasi-interpolation operator shows that
commuting bounded cochain projections converge optimally for
low-regularity solutions. 
Another important consequence of the present construction in the case of
vector-valued elements is that the decay rates of the
best-approximation error only involve the solution seminorm in
$W^{r,p}(\Dom)$ without the need to invoke a bound on the curl or the
divergence of the solution.

Let us put our results in perspective with the literature.  The
construction of quasi-interpolation operators with the above
properties for $H^1$-conforming finite elements is a well-studied
field.  In two space dimensions on triangular meshes, we mention the
early work of \cite{Cl75}; see also \cite{BerGi:98} where the
construction is modified so that the resulting operator leaves the finite element space point-wise
invariant. In a landmark paper by \cite{ScoZh:90}, an alternative
construction is proposed in any space dimension for $H^1$-conforming
finite elements with and without homogeneous boundary conditions; see
also \cite{Girault_Lions_2001} (and \citep[p.~491]{ScoZh:90}) for a
modification leading to $L^1$-stability.  The so-called Scott--Zhang
interpolation operator leads to optimal decay estimates of the
approximation error for functions in $H^1(\Dom)$.  The extension of
this result to functions in $W^{r,p}(\Dom)$ with $r\in(\frac1p,1)$ has
been studied only recently by~\cite{Ciarlet:13} using the
original Scott--Zhang operator. To the authors' best knowledge,
in contrast to $H^1$-conforming elements, quasi-interpolation
results for $\bH(\text{curl})$- and $\bH(\text{div})$-conforming
elements are missing in the literature.

The main ingredient for the Cl\'ement and Scott--Zhang construction
is a regularization of functions based on macroelements consisting
of patches of elements. We adopt here a somewhat different
route.  The key idea is a two-step construction, namely a
projection onto the (fully discontinuous) broken finite element
space followed by an averaging operator that stitches the result
in the spirit of~\cite[Eqs.~(25)-(26)]{Oswald_Comput_1993}.  This
way, patches of mesh cells are only involved when handling
discrete objects, thereby avoiding the delicate question of
reference patches frequently used in the literature.
Averaging operators are often invoked in the literature in various
contexts; we mention, in particular, a posteriori error
estimates~\citep{AcBeC:03,KarPa:03,ErnSV:10},
preconditioners~\citep{Schoberl_Lehren:13}, stabilization
techniques~\citep{BurEr:07}, and discontinuous Galerkin methods with
improved stability properties~\citep{CoKaS:07,CamSo:15}. The
novel feature here is the analysis of averaging operators in the
vector-valued case and the application to quasi-interpolation.  We
emphasize that we devise a unifying framework for the analysis
that works irrespective of the nature of the degrees of freedom,
\ie whether they are nodal values, integrals over edges, faces, or
cells.  Essentially all the finite elements that are traditionally 
used to build $H^1$-, $\bH(\text{curl})$-, and $\bH(\text{div})$-conforming
finite element spaces match the few assumptions of the unified
analysis.

The paper is organized as follows.  In \S\ref{Sec:Finite_Elements}, we
introduce the notation and construct a sequence of abstract finite
element spaces conforming in some functional space $W$. Key
assumptions are identified and listed. 
A local interpolation operator stable in $L^1$
is constructed in \S\ref{Sec:local_approximation}. 
An abstract averaging operator acting
only on discrete functions is introduced and analyzed in
\S\ref{Sec:averaging_operator}. The final
quasi-interpolation operator is constructed in
\S\ref{Sec:quasi_interpolation_operator} without enforcing any
boundary condition. 
The question of 
enforcing boundary conditions is addressed in
\S\ref{Sec:boundary_conditions}.
Finally, \S\ref{sec:technical} contains two technical results
on fractional Sobolev spaces that are of independent interest:
a Poincar\'e inequality and a trace inequality.

\section{Finite elements}\label{Sec:Finite_Elements}
In this section we introduce some notation and construct a sequence of
abstract finite element spaces, conforming in some functional space
$W$. In the entire paper the space dimension is denoted by $d$ and the domain
$D$ is a bounded Lipschitz polyhedron in $\Real^d$.

\subsection{Meshes}
Let $\famTh$ be a mesh sequence that we assume to be
affine and shape-regular in the sense of Ciarlet.  We also assume for
the sake of simplicity that the meshes cover $\Dom$ exactly, that they are composed of convex cells, and that
they are matching, \ie 
for all cells $K,K'\in\calT_h$ such that
$K\ne K'$ and $K\cap K'\ne\emptyset$, the set $K\cap K'$ is a common
vertex, edge, or face of both $K$ and $K'$ (with obvious extensions in
higher space dimensions).  By convention, given a mesh $\calT_h$, the
elements $K\in \calT_h$ are closed sets in $\Real^d$.  

We assume that
there is a reference element $\wK$ such that for any mesh $\calT_h$
and any cell $K\in \calT_h$, there is an affine bijective map
between $\wK$ and $K$, which we henceforth denote
$\trans_K :\wK \to K$. 
Since $\trans_K$ is affine and bijective, there is an invertible
matrix $\Jac_K\in\Real^{d\CROSS d}$ such that
\begin{equation}
\trans_K(\wbx)-\trans_K(\wby) = \Jac_K (\wbx-\wby), \qquad \forall \wbx,\wby\in \wK.
\label{Eq2:TransAff}
\end{equation}
In what follows, we denote points in $\Real^d$, $\Real^d$-valued
functions and $\Real^d$-valued maps in boldface type, and we denote
the Euclidean norm in $\Real^d$ by
$\|\SCAL\|_{\ell^2(\Real^d)}$, or $\|\SCAL\|_{\ell^2}$ when the
context is unambiguous. We abuse the notation by using the same symbol
for the induced matrix norm. Owing to the
shape-regularity assumption of the mesh sequence, 
there are uniform constants $c^\sharp$, $c^\flat$ such
that
\begin{equation}
| {\det(\Jac_K)} | = \mes{K}\mes{\wK}^{-1}, \qquad 
\| \Jac_K \|_{\ell^2} \leq c^\sharp h_K, \qquad 
\| \Jac_K^{-1}  \|_{\ell^2} \leq c^\flat h_K^{-1},
\label{Eq2:propJK}
\end{equation}
where $h_K$ is the diameter of $K$. Recall that
$c^\sharp=\frac{1}{\rho_{\wK}}$ and
$c^\flat = \frac{h_K}{\rho_K}h_{\wK}$ for meshes composed of
simplices, where $\rho_K$ is the diameter of the largest ball that can
be inscribed in $K$, $h_\wK$ is the diameter of $\wK$, and $\rho_\wK$
is the diameter of the largest ball that can be inscribed in $\wK$.

\subsection{Finite element generation}

We are going to consider various approximation spaces based on the
mesh sequence $\famTh$. Again for the sake of simplicity, we assume
that each approximation space is constructed from a fixed reference
finite element $\wKPS$.  The reference degrees of freedom are denoted
$\{\wsigma_{1},\ldots, \wsigma_{\nf}\}$ and the associated reference
shape functions are denoted $\{\wtheta_{1},\ldots, \wtheta_{\nf}\}$;
by definition $\wsigma_i(\wtheta_{j})=\delta_{ij}$, $\forall
i,j\in\intset{1}{\nf}$. We denote $\calN:=\intset{1}{\nf}$ to
alleviate the notation. The shape functions are $\Real^q$-valued for
some integer $q\ge1$. We henceforth assume that $\wP\subset
W^{1,\infty}(\wK;\Real^q)$ (recall that $\wP$ is typically
a space of polynomial
functions, but we do not require this assumption here).

We assume that there exists a Banach space
$V(\wK) \subset L^1(\wK;\Real^q)$ such that the linear forms
$\{\wsigma_{1},\ldots, \wsigma_{\nf}\}$ can be extended to
$\calL(V(\wK);\Real)$, \ie $V(\wK)$ is the domain of the degrees of
freedom; see~\citep[p.~39]{ErnGuermond_FEM}.  Then, we define $\inter_{\wK}:V(\wK)\to \wP$, the
interpolation operator associated with the reference finite element $\wKPS$, by
\begin{equation}
\inter_{\wK}(\wv)(\wbx) = 
\sum_{i\in \calN} \wsigma_i(v) \wtheta_i(\wbx), \qquad \forall \wbx\in
\wK,
\quad \forall \wv\in V(\wK).
\label{Eq2:OpIntLoc}
\end{equation}
By construction,
$\inter_\wK\in\calL(V(\wK);\wP)$, and $\wP$ is point-wise invariant under
$\inter_{\wK}$. 

We now address the question of constructing finite elements for any
mesh cell $K\in \calT_h$. We 
assume that there exists a Banach space $V(K)\subset L^1(K;\Real^q)$ and
a bounded, bijective, linear map between $V(K)$ and $V(\wK)$: 
\begin{equation} 
\label{eq:hyp_psiK} \mapK:
  V(K)\ni v\;\longmapsto\; \mapK(v)\in V(\wK). 
\end{equation}   
We then set
\begin{subequations}\label{Eq2:genEF}
\begin{align}
&P_K:=\bset  p=\mapKmun(\wp) \tq \wp \in \wP \eset,\label{Eq2:genEF_P}\\
&\Sigma_K:= \{\sigma_{K,i}\}_{i\in\calN} \; \text{s.t.} \;
\sigma_{K,i} = \wsigma_i\circ \mapK.\label{Eq2:genEF_S}
\end{align}
\end{subequations}
\begin{proposition}[Finite element]
\label{Prop:GenEF}
The triple $(K,P_K,\Sigma_K)$ is a finite element.
\end{proposition}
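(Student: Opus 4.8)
The plan is to verify the three defining properties of a finite element in the sense of Ciarlet: that $K$ is an admissible geometric cell, that $P_K$ is a finite-dimensional space of functions in $V(K)$, and that $\Sigma_K$ is a unisolvent set of degrees of freedom over $P_K$. The cell $K$ is admissible by the meshing assumptions (it is a closed, convex set with nonempty interior), and $\wP$ is finite-dimensional, so the only substantive point is unisolvence. The key idea is that this property can be transported verbatim from the reference element through the linear bijection $\mapK$, so that essentially no new work beyond bookkeeping is required.

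First I would record the structural consequences of $\mapK$ being a bounded linear bijection from $V(K)$ onto $V(\wK)$: its inverse $\mapKmun$ is linear, whence $P_K=\mapKmun(\wP)$ is a linear subspace of $V(K)$ (using $\wP\subset V(\wK)$, which holds since the reference forms act on $\wP$), of dimension $\dim P_K=\dim\wP=\nf$. Moreover, setting $\theta_{K,i}:=\mapKmun(\wtheta_i)$ for $i\in\calN$, the injectivity of $\mapKmun$ together with the fact that $\{\wtheta_i\}_{i\in\calN}$ is a basis of $\wP$ shows that $\{\theta_{K,i}\}_{i\in\calN}$ is a basis of $P_K$. Next I would check that each $\sigma_{K,i}=\wsigma_i\circ\mapK$ is a genuine degree of freedom, i.e. $\sigma_{K,i}\in\mathcal{L}(V(K);\Real)$ as a composition of the bounded form $\wsigma_i\in\mathcal{L}(V(\wK);\Real)$ with the bounded map $\mapK$. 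The duality then follows by direct computation: for all $i,j\in\calN$, $\sigma_{K,i}(\theta_{K,j})=\wsigma_i(\mapK(\mapKmun(\wtheta_j)))=\wsigma_i(\wtheta_j)=\delta_{ij}$. Hence $\{\sigma_{K,i}\}_{i\in\calN}$ and $\{\theta_{K,i}\}_{i\in\calN}$ are dual bases, which is exactly the unisolvence of $\Sigma_K$ over $P_K$.

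I do not expect a genuine obstacle here: the whole argument is driven by the linearity and bijectivity of $\mapK$, which mechanically inherit the reference-element structure. The only point deserving a moment's care is the bookkeeping of domains, namely ensuring $P_K\subset V(K)$ so that the forms $\sigma_{K,i}$ actually act on $P_K$ and the identities $\sigma_{K,i}(\theta_{K,j})=\delta_{ij}$ are meaningful. This is guaranteed precisely because $\mapK$ is posed in~\eqref{eq:hyp_psiK} as a bijection between the Banach spaces $V(K)$ and $V(\wK)$, rather than merely between the finite-dimensional spaces $P_K$ and $\wP$.
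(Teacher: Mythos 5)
Your proof is correct and follows essentially the same route as the paper: transport the reference-element structure through the linear bijection $\mapK$, count dimensions, and check boundedness of the $\sigma_{K,i}$ by composition. The only cosmetic difference is that you establish unisolvence via the dual-basis identity $\sigma_{K,i}(\theta_{K,j})=\delta_{ij}$, whereas the paper argues directly that any $p\in P_K$ annihilated by all degrees of freedom satisfies $\psi_K(p)=0$ and hence $p=0$; these are equivalent given the dimension count.
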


\begin{proof} 
  Note first that $\dim(P_K)=\dim(\wP)=\nf$ since $\psi_K$ is
  bijective. Moreover, a function $p\in P_K$ such that
  $\sigma_{K,i}(p)=0$ for all $i\in\calN$ is such that $\psi_K(p)=0$ by
  the unisolvence property of the reference finite element; hence,
  $p=0$. Finally, the linear forms $\sigma_{K,i}$ are in
  $\calL(V(K);\Real)$ since $|\sigma_{K,i}(v)| \le
  \|\wsigma_i\|_{\calL(V(\wK);\Real)} \|\psi_K\|_{\calL(V(K);V(\wK))}
  \|v\|_{V(K)}$, for all $v\in V(K)$.
\end{proof}

The above definitions lead us to consider the canonical interpolation
operator associated with the finite element $(K,P_K,\Sigma_K)$:
\begin{equation}
  \inter_{K}(v)(\bx) = 
  \sum_{i\in\calN} \sigma_{K,i}(v) \theta_{K,i}(\bx), \qquad \forall \bx\in K,\quad \forall v\in V(K),
\label{def_of_interK}
\end{equation}
where we have set
$\theta_{K,i} := \mapKmun(\wtheta_i)$. Note that
$\inter_K\in\calL(V(K);P_K)$ and that $P_K$ is point-wise invariant
under $\inter_K$.

Since the mesh is affine, we assume that $\mapK$ has a simple
structure; more precisely, we assume that there is a $q\CROSS q$
invertible matrix $\polA_K$ such that
\begin{equation} \label{localization_of_mapK}
\mapK(v) = \polA_K (v\circ\trans_K),
\end{equation}
which implies that $\mapK$ can be extended as a map from
$L^1(K;\Real^q)$ to $L^1(\wK;\Real^q)$. The following classical result
shows that $\mapK$ maps $W^{l,p}(K;\Real^q)$ to $W^{l,p}(\wK;\Real^q)$
for all $l\in\Natural$ and all $p\in [1,\infty]$. In particular, this
implies that $P_K\subset W^{1,\infty}(K;\Real^q)$. We use the
notation
$|\mapK|_{\calL(W^{l,p}(K;\Real^q);W^{l,p}(\wK;\Real^q))} :=
\sup_{v\in W^{l,p}(K;\Real^q)}
\frac{|\mapK(v)|_{W^{l,p}(\wK;\Real^q)}}{|v|_{W^{l,p}(K;\Real^q)}}$
and similar notation for
$|\mapKmun|_{\calL(W^{l,p}(\wK;\Real^q);W^{l,p}(K;\Real^q))}$,
where it henceforth implicitly understood the denominator
is not zero.

\begin{lemma}[Bound in Sobolev norms]
  Let $l\in\Natural$.
  There is a uniform constant $c$ depending on the shape-regularity of
  the mesh sequence $\famTh$ and on $l$ such that the following holds:
\begin{subequations}\label{eq:bnd_calL_psi}\begin{align}
|\mapK|_{\calL(W^{l,p}(K;\Real^q);W^{l,p}(\wK;\Real^q))} 
&\le c\, \|\polA_K\|_{\ell^2}\; \| \Jac_K \|_{\ell^2}^l  \; |{\det(\Jac_K)}|^{-\frac1p},\\
|\mapKmun|_{\calL(W^{l,p}(\wK;\Real^q);W^{l,p}(K;\Real^q))} 
&\le c\, \|\polA_K^{-1}\|_{\ell^2}\;\| \Jac_K^{-1} \|_{\ell^2}^l  \; |{\det(\Jac_K)}|^{\frac1p},\label{eq:bnd_calL_psi_mun}
\end{align}\end{subequations}
for all $K\in\calT_h$ and all $p\in [1,\infty]$ 
(with $z^{\pm \frac1p}=1$, $\forall z>0$ if $p=\infty$).
\end{lemma}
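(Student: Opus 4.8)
The plan is to establish both bounds in \eqref{eq:bnd_calL_psi} by a single change-of-variables argument, reducing each to a pointwise estimate on the order-$l$ derivatives followed by an integration over the relevant cell. I would prove the bound on $|\mapK|$ in detail and obtain \eqref{eq:bnd_calL_psi_mun} by exchanging the roles of $K$ and $\wK$ and of $(\trans_K,\Jac_K,\polA_K)$ with $(\trans_K^{-1},\Jac_K^{-1},\polA_K^{-1})$.

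First I would exploit the product structure \eqref{localization_of_mapK}. Setting $\wv:=v\circ\trans_K$, so that $\mapK(v)=\polA_K\wv$, I note that $\polA_K$ is a constant $q\CROSS q$ matrix and therefore commutes with differentiation in $\wbx$; hence, for every multi-index $\alpha$ with $|\alpha|=l$ one has $\partial^\alpha(\polA_K\wv)=\polA_K\,\partial^\alpha\wv$ pointwise, and $|\partial^\alpha(\polA_K\wv)(\wbx)|_{\ell^2}\le\|\polA_K\|_{\ell^2}\,|\partial^\alpha\wv(\wbx)|_{\ell^2}$. This isolates the factor $\|\polA_K\|_{\ell^2}$ and leaves me to control the seminorm of $\wv$ alone.

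The next and only delicate step is the chain rule for $\wv=v\circ\trans_K$. Because $\trans_K$ is affine, its differential is the constant matrix $\Jac_K$ and all its higher derivatives vanish, so the general chain-rule (Fa\`a di Bruno) expansion degenerates: writing $D^lv$ for the order-$l$ total derivative of $v$ (a symmetric $l$-linear map on $\Real^d$ with values in $\Real^q$), affineness gives that $D^l\wv(\wbx)$ equals $(D^lv)(\trans_K(\wbx))$ with each of its $l$ arguments precomposed by $\Jac_K$. Using the equivalence of the norm on such arrays with the matrix $\ell^2$ norm then yields the pointwise bound
\[
\bigl|D^l\wv(\wbx)\bigr|_{\ell^2}\le c\,\|\Jac_K\|_{\ell^2}^{\,l}\,\bigl|(D^lv)(\trans_K(\wbx))\bigr|_{\ell^2},
\]
with $c$ depending only on $l$ and $d$ (in particular independent of $K$). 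I expect this combinatorial bookkeeping to be the main, though elementary, point: it is precisely the affineness of $\trans_K$ that prevents any derivative of $\Jac_K$ from appearing, so that each differentiation contributes exactly one factor of $\Jac_K$ and the power $l$ is sharp.

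Finally I would raise the last inequality to the power $p$, integrate over $\wK$, and change variables via $\bx=\trans_K(\wbx)$, for which $\mathrm{d}\bx=|\det(\Jac_K)|\,\mathrm{d}\wbx$; this produces the factor $|\det(\Jac_K)|^{-\frac1p}$ and gives $|\wv|_{W^{l,p}(\wK;\Real^q)}\le c\,\|\Jac_K\|_{\ell^2}^{\,l}\,|\det(\Jac_K)|^{-\frac1p}\,|v|_{W^{l,p}(K;\Real^q)}$. Combined with the factor $\|\polA_K\|_{\ell^2}$ from the first step this is the asserted bound. When $p=\infty$ the integral is replaced by an essential supremum and the determinant factor drops out, consistently with the convention $z^{\pm\frac1p}=1$. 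The inverse estimate \eqref{eq:bnd_calL_psi_mun} then follows by the same computation from $\mapKmun(\wv)=\polA_K^{-1}(\wv\circ\trans_K^{-1})$, using that $\trans_K^{-1}$ is affine with differential $\Jac_K^{-1}$ and Jacobian determinant $(\det\Jac_K)^{-1}$, which replaces $\|\Jac_K\|_{\ell^2}^{\,l}\,|\det\Jac_K|^{-\frac1p}$ by $\|\Jac_K^{-1}\|_{\ell^2}^{\,l}\,|\det\Jac_K|^{+\frac1p}$.
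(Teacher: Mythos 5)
Your proof is correct and follows essentially the same route as the paper: first isolate the constant matrix $\polA_K$ to extract the factor $\|\polA_K\|_{\ell^2}$, then apply the standard transformation of $W^{l,p}$-seminorms under the affine pullback by $\trans_K$ (which the paper delegates to a citation of Ciarlet and Ern--Guermond, and which you prove directly via the degenerate chain rule and the change of variables). The only difference is that you spell out the cited standard result; the argument, including the treatment of $p=\infty$ and the symmetric derivation of the inverse bound, is sound.
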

\begin{proof}
For any multilinear map
  $A\in\multilin_l(\Real^d,\ldots,\Real^d;\Real^q)$, let us set
\[
\|A\|_{\multilin_l(\Real^d,\ldots,\Real^d;\Real^q)}
:= \sup_{(\by_1,\ldots,\by_l)\in \Real^d\CROSS\ldots\CROSS\Real^d}
\frac{\|A(\by_1,\ldots,\by_l)\|_{\ell^2}}{\|\by_1\|_{\ell^2}\ldots
  \|\by_l\|_{\ell^2}}.
\]
Then, denoting by $D^l \mapK(v)$ the $l$-order Frechet derivative of
$\mapK$ at $v$, the assumption \eqref{localization_of_mapK} implies
that
$\|D^l \mapK(v)\|_{\multilin_l(\Real^d,\ldots,\Real^d;\Real^q)} \le
\|\polA_K\|_{\ell^2} \|D^l
(v\circ\trans_K)\|_{\multilin_l(\Real^d,\ldots,\Real^d;\Real^q)}$
for all $l\in \polN$. Then, standard results about the transformation
of seminorms in the Sobolev space $W^{l,p}$ using the pullback by
$\trans_K$ lead to~\eqref{eq:bnd_calL_psi}, see \eg
\cite[Thm.~3.1.2]{Ciarlet_FE_Book_2002} or \cite[Lemma
1.101]{ErnGuermond_FEM}.
\end{proof}

\begin{corollary}[Bound on $\polA_K$]
Assume that there is a uniform constant
$c$ so that
\begin{equation}
  \|\polA_K\|_{\ell^2}\|\polA_K^{-1}\|_{\ell^2}\le c\, \|\Jac_K\|_{\ell^2}\|\Jac_K^{-1}\|_{\ell^2}.
\label{Assumption:AkAkminusone_bounded}
\end{equation} 
Then the following holds for all $s,m\in\polN$, all $K\in\calT_h$ and all $p\in [1,\infty]$:
\begin{equation}\label{kappa_m_s_of_mapK}
|\mapKmun|_{\calL(W^{m,p}(\wK);W^{m,p}(K))} |\mapK|_{\calL(W^{s,p}(K);W^{s,p}(\wK))} 
\le c\, h_K^{s-m}.
\end{equation}
\end{corollary}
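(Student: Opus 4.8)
The plan is to combine the two estimates of the preceding Lemma multiplicatively and then feed in the two structural hypotheses, namely the bound \eqref{Assumption:AkAkminusone_bounded} on $\polA_K$ and the shape-regularity bounds \eqref{Eq2:propJK}. First I would apply \eqref{eq:bnd_calL_psi_mun} with $l=m$ to bound $|\mapKmun|_{\calL(W^{m,p}(\wK);W^{m,p}(K))}$ and the first inequality of \eqref{eq:bnd_calL_psi} with $l=s$ to bound $|\mapK|_{\calL(W^{s,p}(K);W^{s,p}(\wK))}$. Forming the product of these two upper bounds yields, up to a uniform constant,
\[
\|\polA_K^{-1}\|_{\ell^2}\|\polA_K\|_{\ell^2}\;\|\Jac_K^{-1}\|_{\ell^2}^m\|\Jac_K\|_{\ell^2}^s\;|{\det(\Jac_K)}|^{\frac1p}\,|{\det(\Jac_K)}|^{-\frac1p}.
\]

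The crucial observation --- and really the only point worth noting in an otherwise routine computation --- is that the two determinant factors carry opposite exponents $+\frac1p$ and $-\frac1p$, so they cancel exactly. This is precisely the mechanism that renders the final constant independent of $p\in[1,\infty]$, which is the property the corollary is designed to extract. After this cancellation the product reduces to $\|\polA_K\|_{\ell^2}\|\polA_K^{-1}\|_{\ell^2}\,\|\Jac_K\|_{\ell^2}^s\|\Jac_K^{-1}\|_{\ell^2}^m$.

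Next I would invoke the hypothesis \eqref{Assumption:AkAkminusone_bounded} to replace the factor $\|\polA_K\|_{\ell^2}\|\polA_K^{-1}\|_{\ell^2}$ by $c\,\|\Jac_K\|_{\ell^2}\|\Jac_K^{-1}\|_{\ell^2}$, which raises the exponents to give $\|\Jac_K\|_{\ell^2}^{s+1}\|\Jac_K^{-1}\|_{\ell^2}^{m+1}$. Finally I would apply the shape-regularity bounds $\|\Jac_K\|_{\ell^2}\le c^\sharp h_K$ and $\|\Jac_K^{-1}\|_{\ell^2}\le c^\flat h_K^{-1}$ from \eqref{Eq2:propJK}, which turn this into $h_K^{(s+1)-(m+1)}=h_K^{s-m}$ up to a uniform constant collecting $c^\sharp$, $c^\flat$ and the constants from the Lemma and from \eqref{Assumption:AkAkminusone_bounded}. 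Since every bound used is uniform over $K\in\calT_h$ and over $p$, the resulting constant depends only on the shape-regularity and on $s,m$, as claimed. There is no genuine obstacle here: the whole content is the exact cancellation of the $p$-dependent determinant factors, everything else being a direct substitution.
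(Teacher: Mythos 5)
Your proposal is correct and is exactly the paper's argument: the paper's proof simply says to combine \eqref{eq:bnd_calL_psi} with \eqref{Eq2:propJK} and \eqref{Assumption:AkAkminusone_bounded}, which is precisely the computation you carry out, including the cancellation of the $|\det(\Jac_K)|^{\pm 1/p}$ factors.
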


\begin{proof}
Combine \eqref{eq:bnd_calL_psi} with \eqref{Eq2:propJK} and
\eqref{Assumption:AkAkminusone_bounded}.
\end{proof}

\begin{lemma}[Norm equivalence] \label{Lem:Linfty_bounded_by_sigma} 
There exists a uniform constant $c$ such that
\begin{align} \label{eq:Linfty_ddl}
\|v\|_{L^\infty(K;\Real^q)} \le c\, \|\polA_K^{-1}\|_{\ell^2} \sum_{i\in \calN}
|\sigma_{K,i}(v)|, \\
\label{eq:ddl_Linfty} 
\sum_{i\in \calN}
|\sigma_{K,i}(v)| \le c\, \|\polA_K\|_{\ell^2} \|v\|_{L^\infty(K;\Real^q)},
\end{align}
for all $v \in P_K$ and all $K\in\calT_h$.
\end{lemma}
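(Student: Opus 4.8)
The plan is to pull everything back to the reference element $\wK$, where $\wP$ is a \emph{fixed} finite-dimensional space and hence all norms are equivalent with constants independent of $K$, and to confine the entire $K$-dependence to the component-transformation matrix $\polA_K$ appearing in~\eqref{localization_of_mapK}.

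Given $v\in P_K$, I would set $\wv:=\mapK(v)$, which belongs to $\wP$ since $P_K=\mapKmun(\wP)$. The definition of the local degrees of freedom~\eqref{Eq2:genEF_S} gives $\sigma_{K,i}(v)=\wsigma_i(\wv)$ for all $i\in\calN$, so the two discrete sums in the statement both equal $\sum_{i\in\calN}|\wsigma_i(\wv)|$. On the reference element, $\wv\mapsto \sum_{i\in\calN}|\wsigma_i(\wv)|$ defines a norm on $\wP$: it is plainly a seminorm, and it is definite because the vanishing of all $\wsigma_i(\wv)$ forces $\wv=0$ by the unisolvence of $\wKPS$. Since $\wP$ is finite-dimensional, this norm is equivalent to $\|\SCAL\|_{L^\infty(\wK;\Real^q)}$, with a constant $c_0$ depending only on the reference finite element; this yields both $\|\wv\|_{L^\infty(\wK;\Real^q)}\le c_0\sum_{i\in\calN}|\wsigma_i(\wv)|$ and $\sum_{i\in\calN}|\wsigma_i(\wv)|\le c_0\,\|\wv\|_{L^\infty(\wK;\Real^q)}$.

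It then remains to compare the $L^\infty$-norms on $K$ and on $\wK$. From~\eqref{localization_of_mapK} one has $\wv=\polA_K\,(v\circ\trans_K)$ pointwise, and since $\trans_K:\wK\to K$ is a bijection the supremum is unchanged under composition; applying the induced matrix norms of $\polA_K$ and of $\polA_K^{-1}$ then gives $\|\wv\|_{L^\infty(\wK;\Real^q)}\le \|\polA_K\|_{\ell^2}\|v\|_{L^\infty(K;\Real^q)}$ and $\|v\|_{L^\infty(K;\Real^q)}\le \|\polA_K^{-1}\|_{\ell^2}\|\wv\|_{L^\infty(\wK;\Real^q)}$. Unlike the $L^p$-case with $p<\infty$, no Jacobian determinant enters here, since the $L^\infty$-norm only records the range of values of the function. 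Chaining these two bounds with the reference-element equivalence of the previous step produces the two claimed inequalities, both with $c=c_0$. The argument carries no genuine obstacle; the only points meriting care are keeping the geometric dependence isolated in $\polA_K^{\pm1}$ and checking that $\sum_{i\in\calN}|\wsigma_i(\SCAL)|$ is a bona fide norm on $\wP$---precisely the content of unisolvence.
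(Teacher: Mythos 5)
Your proposal is correct and follows essentially the same route as the paper's proof: pull back to the reference element via $\mapK$, use the equivalence on the finite-dimensional space $\wP$ between the $L^\infty$-norm and the norm $\sum_{i\in\calN}|\wsigma_i(\SCAL)|$ (which rests on unisolvence), and isolate the $K$-dependence in $\|\polA_K^{\pm1}\|_{\ell^2}$ through~\eqref{localization_of_mapK} and~\eqref{Eq2:genEF_S}. The paper writes this more tersely but the argument is identical.
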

\begin{proof} 
We prove~\eqref{eq:Linfty_ddl}; the proof for the other 
bound is similar. We observe that
\begin{align*}
\|v\|_{L^\infty(K;\Real^q)} &= \|\psi_K^{-1}(\psi_K(v))\|_{L^\infty(K;\Real^q)}\le \|\polA_K^{-1}\|_{\ell^2} \|\psi_K(v)\|_{L^\infty(\wK;\Real^q)} \\ &\le c\, \|\polA_K^{-1}\|_{\ell^2} \sum_{i\in \calN}
|\wsigma_{i}(\psi_K(v))| = c\, \|\polA_K^{-1}\|_{\ell^2} \sum_{i\in \calN}|\sigma_{K,i}(v)|,
\end{align*}
using the definition of $\psi_K$, norm equivalence in $\wP$, and~\eqref{Eq2:genEF_S}.
\end{proof}

\subsection{Abstract finite element spaces}
\label{sec:broken} 
Let $\{(K,P_K,\Sigma_K)\}_{K\in\calT_h}$ be a $\calT_h$-based family
of finite elements constructed as in Proposition~\ref{Prop:GenEF}.
We consider the broken Sobolev spaces $W^{1,p}(\calT_h;\Real^q):=
\bset v\in L^p(\Dom;\Real^q)\st v_{|K}\in W^{1,p}(K;\Real^q),\ \forall
K\in \calT_h\eset$, $p\in [1,\infty]$, and 
introduce the broken finite element space
\begin{equation} \label{eq:PTh_b} 
P\upb(\calT_h) = \bset v_h\in
   L^1(\Dom;\Real^q) \tq v_{h|K} \in P_K,\, \forall K\in\calT_h\eset.
\end{equation}
Since $P_K\subset W^{1,\infty}(K;\Real^q)$, we infer
that $P\upb(\calT_h) \subset W^{1,\infty}(\calT_h;\Real^q)$.   

Our aim is to define a subspace of $P\upb(\calT_h)$ by means of some
zero-jump condition across the interfaces separating the elements.  We
say that a subset $F\subset\overline\Dom$ is an interface if it has
positive $(d{-}1)$-dimensional measure and if there are distinct mesh
cells $K_l,K_r\in\calT_h$ such that $F=\partial K_l\cap \partial K_r$.
The numbering of the two mesh cells is arbitrary, but kept fixed once
and for all, and we let $\bn_F$ be the unit normal vector to $F$
pointing from $K_l$ to $K_r$. This defines a global orientation of the
interfaces. We denote by $\bn_{K_l}$ and $\bn_{K_r}$ the outward unit
normal of $K_l$ and $K_r$, \ie $\bn_F=\bn_{K_l}=-\bn_{K_r}$.  We say
that a subset $F\subset\overline\Dom$ is a boundary face if it has
positive $(d{-}1)$-dimensional measure and if there is a mesh cell
$K\in\calT_h$ such that $F=\partial K\cap \partial \Dom$, and we let
$\bn_F$ be the unit normal vector to $F$ pointing outward $\Dom$. The
interfaces are collected in the set $\calFhi$, the boundary faces in
the set $\calFhb$, and we let $\calFh= \calFhi\cup \calFhb$.  We now
define a notion of jump across the interfaces. Recalling that
  functions in $W^{1,1}(\calT_h;\Real^q)$ have traces in
  $L^1(\partial K;\Real^q)$ for all $K\in\calT_h$, let $F\in\calFhi$
be a mesh interface, and let $K_l,K_r$ be the two cells such that
$F=\partial K_l\cap \partial K_r$; the jump of
$v\in W^{1,1}(\calT_h;\Real^q)$ across $F$ is defined to be
\begin{equation}
\label{Eq:def_jump_scal}
\jump{v}_F(\bx) =v_{|K_l}(\bx) - v_{|K_r}(\bx) \qquad \text{\ae}\ \bx \in F.
\end{equation}
In what follows, we need to consider the jump of only some components of
$v$ across $F$; we formalize this by introducing 
bounded linear operators $\gamma_{K,F}: W^{1,1}(K;\Real^q)
\to L^1(F;\Real^t)$, for all $K\in\calT_h$, all face $F\in\calFh$ that is a subset of $\partial K$, and some $t\ge1$, as follows:
\begin{equation}
\label{Eq:def_gamma_jump}
\jump{v}_F^\gamma(\bx) =\gamma_{K_l,F}(v_{|K_l})(\bx) - \gamma_{K_r,F}(v_{|K_r})(\bx) 
\qquad \text{\ae}\ \bx \in F.
\end{equation}
We assume that
$|\jump{v}_F^\gamma(\bx)|\le|\jump{v}_F(\bx)|$, \ae $\bx\in F$, for all
$v\in W^{1,1}(\calT_h;\Real^q)$. Since functions in $W^{1,1}(\Dom;\Real^q)$ have zero jump across interfaces (see, \eg \citep[Lemma~1.23]{DiPietro_Ern_dG}), we infer that
\begin{equation}
\label{eq:gamma_trace_0}
v\in W^{1,1}(\Dom;\Real^q) \; \Longrightarrow \; \jump{v}_F^\gamma =
0,\;\forall F\in \calFhi.
\end{equation}
With this setting, we introduce the 
\begin{equation}
P(\calT_h) = \bset v_h\in
 P\upb(\calT_h)  \tq \jump{v_h}_F^\gamma=0,\ \forall F\in\calFhi\eset.
\label{characterization_of_PcalTh_by_jumps}
\end{equation}

\subsection{Finite element examples} \label{Sec:Examples} The present
theory is quite general and covers a large class of scalar- and
vector-valued finite elements. For instance, it covers finite elements
of Lagrange, N\'ed\'elec, and Raviart-Thomas type. To remain general,
we denote the three reference elements corresponding to these three
classes as follows: $(\wK,\wP\upg,\wSigma\upg)$,
$(\wK,\wbP\upc,\wSigma\upc)$ and $(\wK,\wbP\upd,\wSigma\upd)$.  We
think of $(\wK,\wP\upg,\wSigma\upg)$ as a scalar-valued finite element
($q=1$) that has some degrees of freedom which require point
evaluation, 
for instance $(\wK,\wP\upg,\wSigma\upg)$ could be a Lagrange element. We
assume that the finite element $(\wK,\wbP\upc,\wSigma\upc)$ is
vector-valued ($q=d$) and some of its degrees of freedom require to
evaluate integrals over edges.
Typically, $(\wK,\wbP\upc,\wSigma\upc)$ is a
N\'ed\'elec-type or edge element.  Likewise, the finite element
$(\wK,\wbP\upd,\wSigma\upd)$ is assumed to be vector-valued ($q=d$)
and some of its degrees of freedom are assumed to require evaluation
of integrals over faces.
Typically, $(\wK,\wbP\upd,\wSigma\upd)$ is a Raviart-Thomas-type
element. It is not necessary to know the exact nature of the element
that we are handling at the moment.  We denote by $V\upg(\wK)$,
$\bV\upc(\wK)$, $\bV\upd(\wK)$ admissible domains of the degrees of
freedom in the three cases. Let $p\in [1,\infty]$. The above
assumptions imply that we can choose $V\upg(\wK) = W^{s,p}(\wK)$ with
$s>\frac{d}{p}$, $\bV\upc(\wK) = \bW^{s,p}(\wK)$ with
$s>\frac{d-1}{p}$, and $\bV\upd(\wK) = \bW^{s,p}(\wK)$ with
$s>\frac{1}{p}$.  Actually, when $p=1$ we can choose $V\upg(\wK) =
W^{d,1}(\wK)$ (since $W^{d,1}(\wK)\hookrightarrow \calC^0(\wK)$, see \eg 
\cite{Ponce_Van_Schaftingen_2007}),
$\bV\upd(\wK) = \bW^{1,1}(\wK)$ (since functions in $W^{1,1}(\wK)$
have a trace in $L^1(\partial \wK)$), and $\bV\upc(\wK) =
\bW^{d-1,1}(\wK)$ (since functions in $W^{d-1,1}(\wK)$ have traces in
$L^1$ on the one-dimensional edges of $\wK$).

Let $\calT_h$ be a mesh in the sequence $\famTh$ and let $K$ be a cell
in $\calT_h$.  We denote by $\mapKg$, $\mapKc$, $\mapKd$ the linear
maps that are classically used to generate the above
finite elements, \ie
$\mapKg$ is the pullback by $\trans_K$, and $\psi_K\upc$ and
$\psi_K\upd$ are the contravariant and covariant Piola
transformations, respectively. All of these maps fit
the general form~\eqref{localization_of_mapK}, \ie
\begin{subequations}\begin{align}
\polA_K\upg &= 1,
&& \mapKg(v) =  v\circ\trans_K, \label{Eq:Def_mapg} \\
\polA_K\upc &=\Jac_K\tr,
&&\mapKc(\bv) = \Jac_K\tr (\bv\circ\trans_K), \label{Eq:def_Piola_rot}\\
\polA_K\upd &=\det(\Jac_K)\,\Jac_K^{-1},
&& \mapKd(\bv) = \det(\Jac_K)\,\Jac_K^{-1}(\bv\circ\trans_K).\label{Eq:Def_Piola} 
\end{align}\end{subequations}
Note that $c=1$ in \eqref{Assumption:AkAkminusone_bounded} for the above examples. 

The corresponding broken finite element spaces are:
\begin{subequations}\begin{align}
P\upgb(\calT_h) &= \{v_h \in L^1(\Dom)\st 
\mapKg(v_{h|K}) \in \wP\upg, \ \forall K\in \calT_h\}, \\
\bP\upcb(\calT_h) &= \{\bv_h \in \bL^1(\Dom)\st 
\mapKc(\bv_{h|K}) \in \wbP\upc, \ \forall K\in \calT_h\},\\
\bP\updb(\calT_h) &= \{\bv_h \in \bL^1(\Dom)\st 
\mapKd(\bv_{h|K}) \in \wbP\upd, \ \forall K\in \calT_h\}.
\end{align} 
\end{subequations}
This leads us to consider the following $\gamma$-traces:
\begin{subequations}\begin{alignat}{2}
\gamma_{K,F}\upg(v_{|K})(\bx) &= v_{|K}(\bx),&\quad &\forall \bx\in F,\\
\gamma_{K,F}\upc(\bv_{|K})(\bx) &= \bv_{|K}(\bx)\CROSS \bn_F,&\quad &\forall \bx\in F,\\
\gamma_{K,F}\upd(\bv_{|K})(\bx)&= \bv_{|K}(\bx)\SCAL\bn_F,&\quad &\forall \bx\in F,
\end{alignat}\end{subequations}
and the following conforming finite element spaces:
\begin{subequations}\begin{align}
P\upg(\calT_h)   &= \{v_h \in P\upgb(\calT_h) \st \jump{v_h}_F\upg=0,\ \forall F\in\calFhi\}, \\
\bP\upc(\calT_h) &= \{\bv_h \in \bP\upcb(\calT_h) \st \jump{\bv_h}_F\upc=0,\ \forall F\in\calFhi\},\\
\bP\upd(\calT_h) &= \{\bv_h \in \bP\updb(\calT_h) \st \jump{\bv_h}_F\upd=0,\ \forall F\in\calFhi\},
\end{align} \end{subequations} where we slightly simplified the
notation by using $\jump{v_h}_F\upg$ instead of
$\jump{v_h}_F^{\gamma\upg}$, \etc Upon introducing the spaces
$V\upg := \bset v\in L^1(\Dom)\st \GRAD v\in \bL^1(\Dom)\eset$,
$\bV\upc := \bset \bv\in \bL^1(\Dom)\st \ROT \bv\in
\bL^1(\Dom)\eset$, $\bV\upd := \bset \bv\in \bL^1(\Dom)\st \DIV v\in
L^1(\Dom)\eset$, we have $P\upg(\calT_h):= P\upgb(\calT_h) \cap
V\upg$, $\bP\upc(\calT_h) := \bP\upcb(\calT_h) \cap \bV\upc$,
$\bP\upd(\calT_h) := \bP\updb(\calT_h) \cap \bV\upd$; that is to
say, the finite element spaces $P\upg(\calT_h)$, $\bP\upc(\calT_h)$,
$\bP\upd(\calT_h)$ are conforming in $V\upg$, $\bV\upc$, 
$\bV\upd$, respectively.

Let us introduce the canonical interpolation operators $\inter_h\upg$,
$\inter_h\upd$, $\inter_h\upc$ such that
$\inter_h\upg(v)_{|K} = \inter_K\upg(v_{|K})$,
$\inter_h\upc(\bv)_{|K} = \inter_K\upc(\bv_{|K})$,
$\inter_h\upd(\bv)_{|K} = \inter_K\upd(\bv_{|K})$.  The
considerations in \S\ref{Sec:Examples} show that it is legitimate to
take $W^{s,p}(\Dom)$, $s>\frac{d}{p}$, for the domain of
$\inter_h\upg$, $\bW^{s,p}(\Dom)$, $s>\frac{d-1}{p}$, for the domain of
$\inter_h\upc$, and $\bW^{s,p}(\Dom)$, $s>\frac{1}{p},$ for the domain
of $\inter_h\upd$, \ie the canonical interpolation operators
$\inter_h\upg$, $\inter_h\upc$ and $\inter_h\upd$ are not stable in
any $L^p(\Dom)$ space (or $\bL^p(\Dom)$).  The objective of this paper is to construct
quasi-interpolation operators mapping onto the spaces $P\upg(\calT_h)$,
$\bP\upc(\calT_h)$ and $\bP\upd(\calT_h)$ that are stable in
$L^1(\Dom)$ (or $\bL^1(\Dom)$) and have optimal approximation
properties with and without boundary conditions.

\subsection{Summary of the assumptions}
\label{sec:assumptions}
Let us now summarize the assumptions that will be used in the rest of
the paper.  Henceforth $\famTh$ is a shape-regular sequence of 
affine, matching meshes so that \eqref{Eq2:TransAff} and \eqref{Eq2:propJK}
hold. We also assume that the map $\mapK$
satisfies \eqref{localization_of_mapK} and
\eqref{Assumption:AkAkminusone_bounded}.
$\{(K,P_K,\Sigma_K)\}_{K\in\calT_h}$ is a $\calT_h$-based
sequence of finite elements constructed as in
Proposition~\ref{Prop:GenEF}. In view of approximation, we 
define $k$ to be the largest natural
number such that $[\polP_{k,d}]^q \subset \wP$,
where $\polP_{k,d}$ is
the real vector space of $d$-variate polynomials
functions of degree at most $k$, and we assume that
$\wP\subset W^{k+1,\infty}(\wK;\Real^q)$.

We assume that we have at hand 
a notion of $\gamma$-jump across mesh interfaces as described
in~\S\ref{sec:broken}. The 
finite element space
$P(\calT_h)$ is the subspace of the broken finite element space
$P\upb(\calT_h)$ characterized by zero $\gamma$-jumps across
interfaces, see~\eqref{characterization_of_PcalTh_by_jumps}. Finally,
two important assumptions relating the degrees of freedom to the
$\gamma$-jump and $\gamma$-trace are the
estimates~\eqref{jump_sigma_by_jump_trace} and
\eqref{Assumption_sigma_bounded_bnd} below.

In what follows, $c$ denotes a generic positive constant whose value
may depend on the shape-regularity of the mesh sequence $\famTh$ and
on the reference finite element $\wKPS$. The value of this constant
may vary from one occurrence to the other.
 
\section{$L^1$-stable local interpolation}\label{Sec:local_approximation}
In this section we extend the degrees of freedom in order to be able
to approximate functions that are only integrable.  

\subsection{Extension of degrees of freedom}
Let us consider $\wrho_i\in \wP$, $i\in\calN$, be such that
\begin{equation}
\frac{1}{\mes{\wK}}\int_\wK (\wrho_i,\wp)_{\ell^2(\Real^q)} \dif \wx  = \wsigma_i(\wp),
\quad\forall \wp\in \wP.
\end{equation}
Note that $\wrho_i$ is well defined since it is the Riesz
representative of $\wsigma_i$ in $\wP$ when $\wP$ is equipped with
the $L^2$-scalar product weighted by $1/\mes{\wK}$. This leads us to
define
\begin{equation}
\wsigma_i^\sharp(\wv) := \frac{1}{\mes{\wK}} \int_\wK (\wrho_i,\wv)_{\ell^2(\Real^q)}  \dif \wx,
\quad  \forall \wv\in L^1(\wK;\Real^q).
\end{equation}
Note that the assumption $\wP\subset L^\infty(\wK;\Real^q)$ implies that
$\wrho_i\in L^\infty(\wK;\Real^q)$, which in turn implies that all the
extended degrees of freedom $\{\wsigma_i^\sharp\}_{i\in\calN}$ are
indeed bounded over $L^1(\wK;\Real^q)$ since
$\|\wsigma_i^\sharp\|_{\calL(L^1(\wK;\Real^q);\Real)} \le
\mes{\wK}^{-1}\|\wrho_i\|_{L^\infty(\wK;\Real^q)}$.
In passing we have also proved that 
$\|\wsigma_i^\sharp\|_{\calL(L^p(\wK;\Real^q);\Real)} \le
\mes{\wK}^{-1}\|\wrho_i\|_{L^{p'}(\wK;\Real^q)}$
for all $p\in[1,\infty]$, where $\frac1p+\frac{1}{p'}=1$.  We then
define
\begin{equation}
\inter^\sharp_{\wK} (\wv) := \sum_{i\in\calN} \wsigma_i^\sharp(\wv) \wtheta_i,
\qquad  \forall \wv\in L^1(\wK;\Real^q).
\end{equation}
$\wP$ is point-wise invariant under $\inter^\sharp_{\wK}$ since
$\wsigma_i^\sharp(\wp) = \wsigma_i(\wp)$ for all $\wp\in\wP$ and all
$i\in\calN$.

Let $K\in
\calT_h$ and let $(K,P_K,\Sigma_K)$ be a finite element constructed as
in \eqref{Eq2:genEF}. 
Note that the assumption \eqref{localization_of_mapK} implies that
$\mapK(L^1(K;\Real^q)) = L^1(\wK;\Real^q)$.
We then extend the degrees of freedom in $\Sigma_K$ to $L^1(K;\Real^q)$
by setting 
\begin{equation} \label{eq:def_sigmasharp}
  \sigma_{K,i}^\sharp(v) := \wsigma_i^\sharp(\mapK(v)).
\end{equation}
The above definition leads us to define
\begin{equation} \label{eq:def_Isharp}
\inter^\sharp_{K} (v) := \sum_{i\in\calN} \sigma_{K,i}^\sharp(v) \theta_{K,i},
\qquad  \forall v\in L^1(K;\Real^q).
\end{equation}

\begin{proposition}[Stability, commutation, invariance] \label{Prop:Intersharp_bounded_commmutes_invariant}
  (i) There exists a uniform constant $c$ such that
  $\|\inter^\sharp_{K}\|_{\calL(L^p(K;\Real^q);L^p(K;\Real^q))}\le
    c$,
  for all $p\in[1,\infty]$ and all $K\in\calT_h$; (ii)  $\inter^\sharp_{K}$ commutes with $\mapK$; (iii) 
  $P_K$ is point-wise invariant under $\inter^\sharp_{K}$.
\end{proposition}
\begin{proof} 
  Using the triangle inequality in~\eqref{eq:def_Isharp}, the fact
  that $\theta_{K,i} := \mapKmun(\wtheta_i)$ and
  $\sigma_{K,i}^\sharp := \wsigma_i^\sharp \circ \mapK$
  (see~\eqref{eq:def_sigmasharp}), and finally
  using~\eqref{eq:bnd_calL_psi} with $l=0$ and the
  assumption~\eqref{Assumption:AkAkminusone_bounded}, we infer that
\begin{align*}
\|\inter^\sharp_{K}&\|_{\calL(L^p(K;\Real^q);L^p(K;\Real^q))} \\
&\le \sum_{i\in\calN} \|\wsigma_i^\sharp\|_{\calL(L^p(\wK;\Real^q);\Real)} 
  |\mapK|_{\calL(L^p(K;\Real^q);L^p(\wK;\Real^q))}\|\mapKmun(\wtheta_i)\|_{L^p(K;\Real^q)} \\
&\le |\mapK|_{\calL(L^p(K;\Real^q);L^p(\wK;\Real^q))}|\mapKmun|_{\calL(L^p(\wK;\Real^q);L^p(K;\Real^q))} \sum_{i\in\calN} \|\wsigma_i^\sharp\|_{\calL(L^p(\wK;\Real^q);\Real)} 
  \|\wtheta_i\|_{L^p(\wK;\Real^q)} \\
&\le c\, \|\polA_K\|_{\ell^2}\|\polA_K^{-1}\|_{\ell^2}
  \sum_{i\in\calN}
  \|\wsigma_i^\sharp\|_{\calL(L^p(\wK;\Real^q);\Real)} 
  \|\wtheta_i\|_{L^p(\wK;\Real^q)} \\
&\le c\,\|\polJ_K\|_{\ell^2}\|\polJ_K^{-1}\|_{\ell^2} \mes{\wK}^{-1}  \sum_{i\in\calN} \|\wrho_i\|_{L^{p'}(\wK;\Real^q)}.
\end{align*}  
The conclusion readily follows from the shape-regularity assumptions.  To
prove the second statement, we use again that
$\theta_{K,i} = \mapKmun(\wtheta_i)$ to infer that
\begin{align*}
\mapK\left(\inter^\sharp_{K} (v)\right) 
& := \mapK\left(\sum_{i\in\calN}\sigma_{K,i}^\sharp(v) \mapKmun(\wtheta_i)\right)
= \sum_{i\in\calN} \wsigma_i^\sharp(\mapK(v))\wtheta_i
= \inter^\sharp_{\wK} (\mapK(v)),
\end{align*}
for all $v\in L^1(K;\Real^q)$. To prove the third statement, let us
consider any $g\in P_K=\mapK(\wP)$; then using the above definitions we
have
\begin{align*}
\sigma_{K,i}^\sharp(g) & = \wsigma_i^\sharp(\mapK(g)) = \frac{1}{\mes{\wK}}\int_\wK 
(\wrho_i,\mapK(g))_{\ell^2(\Real^q)} \dif \wx = \wsigma_i(\mapK(g) ) = \sigma_{K,i}(g).
\end{align*}
This proves that $\inter^\sharp_K(g) = \inter_K(g)$, hence
$\inter^\sharp_K(g) = g$.
\end{proof}

\begin{remark}
  The construction of $\inter_K^\sharp$ is similar to that used in
  \cite[Appendix]{Girault_Lions_2001} to extend the Scott--Zhang quasi-interpolation
  operator and make it stable in $L^1(\Dom)$ for scalar-valued
  functions.
\end{remark}

\subsection{Error estimates for $\inter_K^\sharp$}
We establish in this section error estimates for the operator
$\inter_K ^\sharp$.

\begin{theorem}[Local interpolation]
  \label{Th:interLoc_sharp} There exists a uniform
  constant $c$ such that the following local error estimate holds:
\begin{equation}
  |v-\inter_K^\sharp v |_{W^{m,p}(K;\Real^q)} \leq c \, h_K^{l-m} |v|_{W^{l,p}(K;\Real^q)},
\label{Eq2:ineg-inter-locale}
\end{equation}
for all $l\in\intset{0}{k+1}$, all $m\in\intset{0}{l}$, 
all $p\in [1,\infty]$, all $v \in W^{l,p}(K;\Real^q)$,
and all $K\in\calT_h$. 
\end{theorem}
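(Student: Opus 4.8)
The plan is to reduce the estimate to the fixed reference element by exploiting the commutation property, and then to scale back. First I would use Proposition~\ref{Prop:Intersharp_bounded_commmutes_invariant}(ii), which gives $\mapK\circ\inter^\sharp_{K} = \inter^\sharp_{\wK}\circ\mapK$, together with the linearity and bijectivity of $\mapK$. Setting $\wv:=\mapK(v)$, this produces the factorization $v-\inter^\sharp_{K}v = \mapKmun(\wv - \inter^\sharp_{\wK}\wv)$. By the very definition of the operator seminorm $|\mapKmun|_{\calL(W^{m,p}(\wK);W^{m,p}(K))}$ one then has
\begin{equation*}
|v-\inter^\sharp_{K}v|_{W^{m,p}(K;\Real^q)} \le |\mapKmun|_{\calL(W^{m,p}(\wK);W^{m,p}(K))}\, |\wv - \inter^\sharp_{\wK}\wv|_{W^{m,p}(\wK;\Real^q)},
\end{equation*}
so everything hinges on bounding $|\wv - \inter^\sharp_{\wK}\wv|_{W^{m,p}(\wK;\Real^q)}$ by $c\,|\wv|_{W^{l,p}(\wK;\Real^q)}$ on $\wK$.

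The reference estimate is a Bramble--Hilbert/Deny--Lions argument. Since $l-1\le k$, we have $[\polP_{l-1,d}]^q\subset[\polP_{k,d}]^q\subset\wP$, and $\wP$ is pointwise invariant under $\inter^\sharp_{\wK}$ (because $\wsigma^\sharp_i(\wp)=\wsigma_i(\wp)$ for all $\wp\in\wP$); hence $\inter^\sharp_{\wK}\wp=\wp$ for every polynomial $\wp\in[\polP_{l-1,d}]^q$, and therefore $\wv-\inter^\sharp_{\wK}\wv = (\wv-\wp)-\inter^\sharp_{\wK}(\wv-\wp)$ for any such $\wp$. The operator $\inter^\sharp_{\wK}$ is $L^p$-stable on $\wK$ (from $\|\wsigma^\sharp_i\|_{\calL(L^p(\wK;\Real^q);\Real)}\le\mes{\wK}^{-1}\|\wrho_i\|_{L^{p'}(\wK;\Real^q)}$), and since it takes values in the finite-dimensional space $\wP$, on which all norms are equivalent, it is in fact bounded from $L^p(\wK;\Real^q)$ into $W^{m,p}(\wK;\Real^q)$ with a fixed constant. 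Combining the triangle inequality with this boundedness and $m\le l$ gives
\begin{equation*}
|\wv-\inter^\sharp_{\wK}\wv|_{W^{m,p}(\wK;\Real^q)} \le c\,\|\wv-\wp\|_{W^{m,p}(\wK;\Real^q)} \le c\,\|\wv-\wp\|_{W^{l,p}(\wK;\Real^q)}.
\end{equation*}
Taking the infimum over $\wp\in[\polP_{l-1,d}]^q$ and invoking the Deny--Lions theorem, which furnishes a polynomial with $\|\wv-\wp\|_{W^{l,p}(\wK;\Real^q)}\le c\,|\wv|_{W^{l,p}(\wK;\Real^q)}$, closes the reference estimate.

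Finally I would scale back: bounding $|\wv|_{W^{l,p}(\wK;\Real^q)}\le|\mapK|_{\calL(W^{l,p}(K);W^{l,p}(\wK))}\,|v|_{W^{l,p}(K;\Real^q)}$ and applying the Corollary bound \eqref{kappa_m_s_of_mapK} with $s=l$, namely $|\mapKmun|_{\calL(W^{m,p}(\wK);W^{m,p}(K))}|\mapK|_{\calL(W^{l,p}(K);W^{l,p}(\wK))}\le c\,h_K^{l-m}$, assembles the claim. The step requiring the most care is the reference estimate: one must upgrade the mere $L^p$-stability of $\inter^\sharp_{\wK}$ to $W^{m,p}$-continuity, which is exactly where finite-dimensionality of $\wP$ enters, and one must respect the constraint $l\le k+1$ so that the subtracted polynomial lies in $\wP$ and is thereby preserved by $\inter^\sharp_{\wK}$. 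The degenerate case $l=0$ involves no polynomial subtraction and follows directly from the $L^p$-stability of Proposition~\ref{Prop:Intersharp_bounded_commmutes_invariant}(i).
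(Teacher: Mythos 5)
Your proposal is correct and follows essentially the same route as the paper: reduce to the reference element via the commutation property of Proposition~\ref{Prop:Intersharp_bounded_commmutes_invariant}(ii), establish the reference estimate by combining the pointwise invariance of $[\polP_{l-1,d}]^q$ under $\inter^\sharp_{\wK}$ with $L^p$-stability, norm equivalence on $\wP$, and the Deny--Lions lemma, and then scale back using~\eqref{kappa_m_s_of_mapK}. The paper phrases the reference step through the error operator $\calG(\ww)=\ww-\inter^\sharp_{\wK}(\ww)$ and its vanishing on polynomials, which is the same identity you write as $\wv-\inter^\sharp_{\wK}\wv=(\wv-\wp)-\inter^\sharp_{\wK}(\wv-\wp)$.
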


\begin{proof}
  Let $l\in\intset{0}{k+1}$ and $m\in\intset{0}{l}$.\\
  (1) Let us set $\calG(\ww) := \ww - \inter_{\wK}^\sharp(\ww)$ for
  all $\ww\in W^{l,p}(\wK;\Real^q)$. The operator $\calG$ is well-defined
  since $W^{l,p}(\wK;\Real^q) \hookrightarrow L^1(\wK;\Real^q)$.  Since all
  the norms are equivalent in $\wP$ and
  $\wP \subset W^{k+1,p}(\wK;\Real^q)\hookrightarrow W^{m,p}(\wK;\Real^q)$, there
  exists $c$ depending only on $\nf$ and $\wK$ such that
  $\|\inter_{\wK}^\sharp(\ww)\|_{W^{m,p}(\wK;\Real^q)}\le c
  \|\inter_{\wK}^\sharp(\ww)\|_{L^{1}(\wK;\Real^q)}$,
  which in turns implies that
  $\|\inter_{\wK}^\sharp(\ww)\|_{W^{m,p}(\wK;\Real^q)} \le c
  \|\ww\|_{L^{1}(\wK;\Real^q)}$,
  since we have already established that $\inter_{\wK}^\sharp$ is
  uniformly bounded over $L^{1}(\wK;\Real^q)$; hence,
  $\calG \in \calL(W^{l,p}(\wK;\Real^q);W^{m,p}(\wK;\Real^q))$.  Assume first that
  $l\ge1$, then $[\polP_{l-1}]^q$ is point-wise invariant under
  $\inter_{\wK}^\sharp$ since $l-1\le k$ and
  $[\polP_{l-1}]^q \subset [\polP_k]^q \subset \wP$; this in turn implies that the
  operator $\calG$ vanishes on $[\polP_{l-1}]^q$. As a consequence, we
  infer that
\begin{align*}
| \ww -  \inter_{\wK}^\sharp &\ww |_{W^{m,p}(\wK;\Real^q)} = {}| {\calG}(\ww)|_{W^{m,p}(\wK;\Real^q)}=
\inf_{\wp \in [\polP_{l-1}]^q} |{\calG}(\ww+\wp)|_{W^{m,p}(\wK;\Real^q)} \\
& \le \| {\calG} \|_{\calL(W^{l,p}(\wK;\Real^q);W^{m,p}(\wK;\Real^q))} \, \inf_{\wp \in [\polP_{l-1}]^q} \| \ww + \wp
\|_{W^{l,p}(\wK;\Real^q)}\\
& \le c \, \inf_{\wp \in [\polP_{l-1}]^q} \| \ww + \wp\|_{W^{l,p}(\wK;\Real^q)} 
\le c\, |\ww |_{W^{l,p}(\wK;\Real^q)},
\end{align*}
for all $\ww\in W^{l,p}(\wK;\Real^q)$, where the last estimate is a
consequence of the Bramble--Hilbert/Deny--Lions Lemma.
Finally, the above inequality is trivial if $l=m=0$. \\
(2) Now let $v\in W^{l,p}(K;\Real^q)$. Using the above argument together
with the fact that $\inter_K^\sharp$ commutes with $\mapK$ (see
Proposition~\ref{Prop:Intersharp_bounded_commmutes_invariant}), we
have
\[
\begin{aligned}
|v - \inter_K^\sharp  v &|_{W^{m,p}(K;\Real^q)}\le 
|\mapKmun|_{\calL(W^{m,p}(\wK;\Real^q);W^{m,p}(K;\Real^q))} \,
|\mapK(v)- \mapK(\inter_{K}^\sharp v)|_{W^{m,p}(\wK;\Real^q)}\\
 &\le 
|\mapKmun|_{\calL(W^{m,p}(\wK;\Real^q);W^{m,p}(K;\Real^q))} \,
|\mapK(v)- \inter_{\wK}^\sharp(\mapK(v))|_{W^{m,p}(\wK;\Real^q)} \\
&\le c\,
|\mapKmun|_{\calL(W^{m,p}(\wK;\Real^q);W^{m,p}(K;\Real^q))} \,
|\mapK(v)|_{W^{l,p}(\wK;\Real^q)} \\
& \le c\,
|\mapKmun|_{\calL(W^{m,p}(\wK;\Real^q);W^{m,p}(K;\Real^q))} 
|\mapK|_{\calL(W^{l,p}(K;\Real^q);W^{l,p}(\wK;\Real^q))} |v|_{W^{l,p}(K;\Real^q)}.
\end{aligned}
\]
The estimate~\eqref{Eq2:ineg-inter-locale} follows by using
\eqref{kappa_m_s_of_mapK}. \end{proof}

\section{Averaging operator}\label{Sec:averaging_operator}
In this section, we introduce a bounded linear operator
$\calJ_h\upav :P\upb(\calT_h) \to P(\calT_h)$ based on averaging.

\subsection{Connectivity array}  
Let $\{\varphi_a\}_{a\in\calA_h}$ be a basis of $P(\calT_h)$; the
functions $\varphi_a$ are called global shape functions.  We assume
that this basis is constructed so that for any $K\in \calT_h$,
either $\interior(K)\cap \supp(\varphi_{a}) =\emptyset$  or there is a unique $i\in
\calN$ such that $\varphi_{a|K} = \theta_{K,i}$.  (Recall that this is
the usual way of constructing finite element bases.)  We denote by
$\sfa: \calT_h\CROSS \calN \to \calA_h$ the map such that
$\varphi_{\sfa(K,i)|K} = \theta_{K,i}$; this map is henceforth called
connectivity array.  Note that $\sfa$ is surjective by definition, but
in general $\sfa$ is not injective. Denoting by $\sfa^{-1}(a)$ the
preimage of $a\in \calA_h$, we define the connectivity set
$\calC_a\subset\calT_h\CROSS\calN$ for any $a\in\calA_h$ such that
\begin{equation}
\calC_a := \sfa^{-1}(a) = \{(K,i)\in\calT_h\CROSS\calN \st a=\sfa(K,i)\}.
\end{equation}

\begin{remark}[Particular case
  $\card(\calC_a)=1$] \label{Rem:Card_Ta_is_one} Assume that
  $\card(\calC_a)=1$, \ie $\calC_a=\{(K_0,i_0)\}$. Let
    $K\ne K_0$, then it is not possible to find an index $i\in\calN$ such that $a=\sfa(K,i)$
    since $\card(\calC_a)=1$; this in turn implies that
    $\interior(K)\cap \supp(\varphi_{a}) =\emptyset$, hence
    $\varphi_{a|\interior(K)}=0$.  
This means
  that $\varphi_a$ is supported on one element only, \ie $\varphi_a$
  is the zero extension of $\theta_{K_0,i_0}$. Given the
  characterization of $P(\calT_h)$ assumed in
  \eqref{characterization_of_PcalTh_by_jumps}, this means that the
  $\gamma$-trace of $\varphi_a$ on the interior faces of $K$ is zero.
\end{remark}

For any $a\in \calA_h$, we set $\calF_a\upint=\emptyset$ if
$\card(\calC_a)=1$.  If $\card(\calC_a)\ge 2$ we define
$\calF_a\upint\subset\calFhi$ to be the set of the interfaces $F$ such
that there are $(K,i),(K',i')\in \calC_a$ so that $F=\partial K\cap \partial K'$.  
We now relate the $\gamma$-traces to the degrees of freedom by making
the following assumption: there exists a uniform constant $c$ such
that the following holds for all $v$ in $P\upb(\calT_h)$ and all
$a\in\calA_h$ such that $\card(\calC_a)\ge 2$:
\begin{equation}
  \left|\sigma_{K,i}(v_{|K})-\sigma_{K',i'}(v_{|K'})\right| 
  \le c\, \min(\|\polA_K\|_{\ell^2},\|\polA_{K'}\|_{\ell^2}) \|\jump{v}_F^\gamma\|_{L^\infty(F;\Real^t)},
\label{jump_sigma_by_jump_trace}
\end{equation}
for all $F\in\calF_a\upint$ and all pairs $(K,i), (K',i')\in \calC_a$
such that $F=\partial K\cap \partial K'$.  The
estimate~\eqref{jump_sigma_by_jump_trace} is natural since the
degrees of freedom of finite elements providing some conformity in
$H^1$, $\Hrt$, or $\Hdv$ are devised to coincide across
interfaces when the $\gamma$-jump is zero and when the degrees of
freedom in question belong to the same connectivity class.
In particular, the 
estimate~\eqref{jump_sigma_by_jump_trace} holds true for all the finite
  elements considered in~\S\ref{Sec:Examples}.  Owing to
\eqref{characterization_of_PcalTh_by_jumps}, the assumption
\eqref{jump_sigma_by_jump_trace} immediately implies that
\begin{equation}
\left|\sigma_{K,i}(v_{|K})-\sigma_{K',i'}(v_{|K'})\right| =0, \qquad \forall v\in P(\calT_h),\,
\forall a \in\calA_h, \,\forall (K,i), (K',i')\in \calC_a.
\label{jump_sigma_is_zero_in_PcalTh}
\end{equation}

\subsection{Averaging operator} We define the operator
$\calJ_h\upav :P\upb(\calT_h) \to P(\calT_h)$ by
\begin{equation} \label{def_Jh_upd} 
\calJ_h\upav(v) = \sum_{a\in \calA_h} \left( \frac{1}{\card(\calC_a)}\sum_{(K,i)\in \calC_a}\sigma_{K,i}(v_{|K}) \right) \varphi_a.
\end{equation}
For any $K\in\calT_h$, we introduce the notation
\begin{subequations}\begin{align}
\calT_K &:=  \cup_{i\in \calN} \bset K'\in \calT_h 
\st \exists i'\in\calN, \ (K',i') \in \calC_{\sfa(K,i)} \eset,\\
\Dom_K &:= \interior \{\cup_{K'\in \calT_K} K'\}.
\end{align}\end{subequations}
The set $\calT_K$ is the union of all the cells that share global shape
functions with $K$ and $\Dom_K$
is the interior of the collection of the points composing the cells in $\calT_K$.

\begin{lemma}[$L^p$-stability] \label{lem:stab_Oswald}
There exists a uniform constant $c$ such that
\begin{equation}
  \|\calJ_h\upav(v)\|_{L^p(K;\Real^q)}  \le c\, \| v\|_{L^p(\Dom_K;\Real^q)},
\end{equation}
for all $p\in[1,\infty]$, all $v\in P\upb(\calT_h)$, and all $K\in \calT_h$.
\end{lemma}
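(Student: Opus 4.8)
The plan is to prove the $L^p$-stability of the averaging operator $\calJ_h\upav$ by restricting the sum in~\eqref{def_Jh_upd} to the single cell $K$, bounding each coefficient by a degree of freedom evaluated on some neighboring cell, and then using the local norm equivalences already established. Since $\{\varphi_a\}_{a\in\calA_h}$ is a finite element basis, on the cell $K$ only those global shape functions with $\varphi_{a|K}\ne 0$ contribute, and for each such $a$ there is a unique $i\in\calN$ with $a=\sfa(K,i)$ and $\varphi_{a|K}=\theta_{K,i}$. Thus I would first rewrite
\begin{equation*}
\calJ_h\upav(v)_{|K} = \sum_{i\in\calN}\left(\frac{1}{\card(\calC_{\sfa(K,i)})}\sum_{(K',i')\in\calC_{\sfa(K,i)}}\sigma_{K',i'}(v_{|K'})\right)\theta_{K,i},
\end{equation*}
so that the value of $\calJ_h\upav(v)$ on $K$ depends only on the degrees of freedom $\sigma_{K',i'}(v_{|K'})$ for cells $K'\in\calT_K$, that is, on the restriction of $v$ to $\Dom_K$.

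Next I would apply the triangle inequality and the left norm equivalence~\eqref{eq:Linfty_ddl} from Lemma~\ref{Lem:Linfty_bounded_by_sigma} to pass from the $L^p(K)$-norm of $\theta_{K,i}$ to a bound in terms of the coefficients. Taking $L^p$-norms on $K$, using $\|\theta_{K,i}\|_{L^p(K;\Real^q)}\le |K|^{1/p}\|\theta_{K,i}\|_{L^\infty(K;\Real^q)}$ and bounding each averaged coefficient by $\max_{(K',i')}|\sigma_{K',i'}(v_{|K'})|$ (the convex-combination weights sum to one), I reduce matters to controlling $\sum_{i\in\calN}\|\theta_{K,i}\|_{L^\infty(K;\Real^q)}\,|\sigma_{K',i'}(v_{|K'})|$. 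The right norm equivalence~\eqref{eq:ddl_Linfty} then converts the degree-of-freedom factor into $\|v_{|K'}\|_{L^\infty(K';\Real^q)}$. This gives a bound involving $\|\polA_{K'}\|_{\ell^2}\|\polA_K^{-1}\|_{\ell^2}$ together with the $L^\infty$-norm of $v$ on neighboring cells.

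The delicate point will be converting an $L^\infty$-bound on neighboring cells back into an $L^p$-bound on $\Dom_K$ with a constant uniform in $h$. Since each $v_{|K'}$ lies in the finite-dimensional space $P_{K'}$, the local inverse inequality (again via norm equivalence on $\wP$ transported by $\mapKmun$, exactly as in Lemma~\ref{Lem:Linfty_bounded_by_sigma}) yields $\|v_{|K'}\|_{L^\infty(K';\Real^q)}\le c\,|K'|^{-1/p}\|\polA_{K'}^{-1}\|_{\ell^2}\|v\|_{L^p(K';\Real^q)}$; this is where the $|K'|^{1/p}$ factor gained from $\|\theta_{K,i}\|_{L^p(K)}$ is consumed. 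Combining all factors, the cumulative product of $\polA$-norms collapses under assumption~\eqref{Assumption:AkAkminusone_bounded} and the shape-regularity estimates~\eqref{Eq2:propJK}: on a shape-regular mesh the ratios $\|\polA_{K'}\|_{\ell^2}\|\polA_{K'}^{-1}\|_{\ell^2}$ and $|K'|/|K|$ for $K'\in\calT_K$ are uniformly bounded, because neighboring cells sharing a global shape function have comparable diameters. Finally, bounding $\|v\|_{L^p(K';\Real^q)}\le\|v\|_{L^p(\Dom_K;\Real^q)}$ and noting that $\card(\calT_K)$ is uniformly bounded yields the claim. The main obstacle is thus the careful bookkeeping of the $\polA$- and Jacobian-norm factors across the patch $\calT_K$ to verify that they cancel into a uniform constant; this hinges on the fact that cells in $\calT_K$ are all shape-regular neighbors of $K$ with equivalent local length scales.
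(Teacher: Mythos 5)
Your skeleton matches the paper's: localize $\calJ_h\upav(v)$ to $K$, pull the norm onto the local shape functions $\theta_{K,i}$ via \eqref{eq:Linfty_ddl}, and convert the degrees of freedom back into norms of $v$ over the cells of $\calT_K$. (The paper only treats $p=\infty$ explicitly and invokes local inverse inequalities for the other values of $p$; your direct $L^p$ bookkeeping is an acceptable variant.) There is, however, one step that does not go through under the paper's stated assumptions. When you bound $|\sigma_{K',i'}(v_{|K'})|$ directly by \eqref{eq:ddl_Linfty} applied on the cell $K'$, you produce the factor $\|\polA_{K'}\|_{\ell^2}$, which then sits next to the factor $\|\polA_K^{-1}\|_{\ell^2}$ coming from $\|\theta_{K,i}\|_{L^\infty(K;\Real^q)}$. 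The resulting cross-cell product $\|\polA_K^{-1}\|_{\ell^2}\|\polA_{K'}\|_{\ell^2}$ is not controlled by assumption \eqref{Assumption:AkAkminusone_bounded}, which only bounds the same-cell product $\|\polA_K\|_{\ell^2}\|\polA_K^{-1}\|_{\ell^2}$; nothing in the abstract framework relates $\polA_{K'}$ to $\polA_K$ for distinct cells, and the appeal to comparable diameters does not help because $\|\polA_K\|_{\ell^2}$ is not assumed to be a function of $h_K$ alone. (The needed comparability does hold for the three examples of \S\ref{Sec:Examples}, but the lemma is stated in the abstract setting.)

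The paper circumvents this by inserting the triangle inequality $|\sigma_{K',i'}(v_{|K'})|\le|\sigma_{K',i'}(v_{|K'})-\sigma_{K,i}(v_{|K})|+|\sigma_{K,i}(v_{|K})|$ and bounding the difference with assumption \eqref{jump_sigma_by_jump_trace}: the factor $\min(\|\polA_K\|_{\ell^2},\|\polA_{K'}\|_{\ell^2})$ appearing there can be replaced by $\|\polA_K\|_{\ell^2}$, and the remaining term $|\sigma_{K,i}(v_{|K})|$ is bounded by \eqref{eq:ddl_Linfty} on $K$ itself. In this way every $\polA$-norm is attached to the cell $K$, and only the same-cell product $\|\polA_K\|_{\ell^2}\|\polA_K^{-1}\|_{\ell^2}$ needs to be bounded; this is precisely what the $\min$ in \eqref{jump_sigma_by_jump_trace} is designed for. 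To repair your argument, either take this detour or add (and verify for the examples) the extra hypothesis $\|\polA_K^{-1}\|_{\ell^2}\|\polA_{K'}\|_{\ell^2}\le c$ for all $K'\in\calT_K$. A minor further slip: the local inverse inequality should read $\|w\|_{L^\infty(K';\Real^q)}\le c\,\mes{K'}^{-1/p}\|w\|_{L^p(K';\Real^q)}$ for $w\in P_{K'}$, since the product $\|\polA_{K'}\|_{\ell^2}\|\polA_{K'}^{-1}\|_{\ell^2}$ generated in its derivation is already absorbed by \eqref{Assumption:AkAkminusone_bounded}; no stray factor $\|\polA_{K'}^{-1}\|_{\ell^2}$ should remain there.
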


\begin{proof}
  We prove the result for $p=\infty$; the other cases are obtained by
  using local inverse inequalities in $P\upb(\calT_h)$.  Using the
  triangle inequality and the shape-regularity of the mesh sequence
  $\famTh$, we infer that
\begin{align*}
\|\calJ_h\upav(v)\|_{L^\infty(K;\Real^q)} 
&\le \sum_{i\in\calN} \frac{\|\theta_{K,i}\|_{L^\infty(K;\Real^q)}}{\card(\calC_{\sfa(K,i)})} \sum_{(K',i')\in \calC_{\sfa(K,i)}} \left|\sigma_{K',i'}(v_{|K'})\right|  \\
&\le c\sum_{i\in\calN} \frac{\|\polA_K^{-1}\|_{\ell^2}}{\card(\calC_{\sfa(K,i)})}\sum_{(K',i')\in \calC_{\sfa(K,i)}} \left|\sigma_{K',i'}(v_{|K'})\right| 
 \\
&\le  c \sum_{K'\in \calT_K} 
\|\polA_{K}^{-1}\|_{\ell^2}
\sum_{i'\in\calN} 
\left|\sigma_{K',i'}(v_{|K'})\right| 
\le c \,  \| v\|_{L^\infty(\Dom_K;\Real^q)},
\end{align*}
where
the last estimate results from 
$|\sigma_{K',i'}(v_{|K'})|\le |\sigma_{K',i'}(v_{|K'})-\sigma_{K,i}(v_{|K})|
+|\sigma_{K,i}(v_{|K})|$, the assumption~\eqref{jump_sigma_by_jump_trace},
the inequality $|\jump{v}_F^\gamma|_{L^\infty(F;\Real^t)} \le 
|\jump{v}_F|_{L^\infty(F;\Real^t)} \le \| v\|_{L^\infty(\Dom_K;\Real^q)}$ and~\eqref{eq:ddl_Linfty}.
\end{proof}

\begin{lemma}[Approximation by averaging] \label{lem:bnd_Osw_RT_N} There exists
  a uniform constant $c$ such that the following
  holds with the notation $\calFKi:= \cup_{i\in\calN}\calF_{\sfa(K,i)}\upint$: 
\begin{align} \label{eq:bnd_Osw_RT_N}
|v-\calJ_h\upav(v)|_{W^{m,p}(K;\Real^q)} & \le c h_K^{d\left(\tfrac1p-\tfrac1r\right) 
+ \tfrac1r-m} \sum_{F\in\calFKi} \|\jump{v}_F^\gamma\|_{L^r(F;\Real^t)}
\end{align}
for all $m\in \intset{0}{k+1}$, all $p,r\in [1,\infty]$, all $v\in P\upb(\calT_h)$, and all
$K\in\calT_h$.
\end{lemma}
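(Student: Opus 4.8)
The plan is to bound the local error $|v-\calJ_h\upav(v)|_{W^{m,p}(K;\Real^q)}$ by comparing both $v$ and $\calJ_h\upav(v)$ to a common polynomial and exploiting the fact that $\calJ_h\upav$ reproduces polynomials when the $\gamma$-jumps vanish. The key observation is that the difference $w_h := v-\calJ_h\upav(v)$ is itself a function in $P\upb(\calT_h)$, so on the single cell $K$ it lies in $P_K$, and I can use local inverse inequalities together with Lemma~\ref{Lem:Linfty_bounded_by_sigma} to convert the $W^{m,p}$-seminorm into a sum of degrees of freedom $\sigma_{K,i}(w_h)$.

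First I would compute $\sigma_{K,i}(\calJ_h\upav(v)_{|K})$ explicitly using the definition \eqref{def_Jh_upd} and the connectivity structure. Since $\varphi_{\sfa(K,i)|K}=\theta_{K,i}$ and the degrees of freedom satisfy $\sigma_{K,i}(\theta_{K,j})=\delta_{ij}$, the coefficient extracted by $\sigma_{K,i}$ is exactly the averaged value $\frac{1}{\card(\calC_{\sfa(K,i)})}\sum_{(K',i')\in\calC_{\sfa(K,i)}}\sigma_{K',i'}(v_{|K'})$. Therefore
\begin{equation}
\sigma_{K,i}(v_{|K}) - \sigma_{K,i}(\calJ_h\upav(v)_{|K})
= \frac{1}{\card(\calC_{\sfa(K,i)})}\sum_{(K',i')\in\calC_{\sfa(K,i)}} \bigl(\sigma_{K,i}(v_{|K})-\sigma_{K',i'}(v_{|K'})\bigr).
\label{eq:dof_difference}
\end{equation}
Each term on the right is controlled by the assumption \eqref{jump_sigma_by_jump_trace} in terms of $\min(\|\polA_K\|_{\ell^2},\|\polA_{K'}\|_{\ell^2})\|\jump{v}_F^\gamma\|_{L^\infty(F;\Real^t)}$ over the relevant interface $F\in\calFKi$. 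This is the heart of the argument, since it is precisely where the zero-jump characterization of $P(\calT_h)$ enters, guaranteeing that the error vanishes for conforming functions and hence, by the Bramble--Hilbert mechanism, for polynomials up to degree $k$.

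Next I would assemble the seminorm bound. Since $w_h\in P_K$, a local inverse inequality on $K$ gives $|w_h|_{W^{m,p}(K;\Real^q)}\le c\,h_K^{-m}\|w_h\|_{L^p(K;\Real^q)}$, and passing from $L^p$ to $L^\infty$ costs a factor $h_K^{d/p}$ (using $|K|\sim h_K^d$). Then \eqref{eq:Linfty_ddl} bounds $\|w_h\|_{L^\infty(K;\Real^q)}$ by $c\,\|\polA_K^{-1}\|_{\ell^2}\sum_{i\in\calN}|\sigma_{K,i}(w_h)|$. Inserting \eqref{eq:dof_difference} and \eqref{jump_sigma_by_jump_trace}, the product $\|\polA_K^{-1}\|_{\ell^2}\min(\|\polA_K\|_{\ell^2},\|\polA_{K'}\|_{\ell^2})$ is uniformly bounded by shape-regularity via \eqref{Assumption:AkAkminusone_bounded} and \eqref{Eq2:propJK}. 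Finally I would convert $\|\jump{v}_F^\gamma\|_{L^\infty(F;\Real^t)}$ to the $L^r$-norm on the face: since $\jump{v}_F^\gamma$ restricted to $F$ is the trace of a discrete function, a scaled inverse inequality on $F$ gives $\|\jump{v}_F^\gamma\|_{L^\infty(F;\Real^t)}\le c\,h_K^{-(d-1)/r}\|\jump{v}_F^\gamma\|_{L^r(F;\Real^t)}$. Collecting the powers of $h_K$, namely $-m$, $+d/p$, and $-(d-1)/r$, and recognizing that $d/p-(d-1)/r = d(\tfrac1p-\tfrac1r)+\tfrac1r$, yields the claimed exponent $h_K^{d(\frac1p-\frac1r)+\frac1r-m}$.

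The main obstacle I anticipate is tracking the powers of $h_K$ and the operator norms $\|\polA_K\|_{\ell^2}$, $\|\polA_K^{-1}\|_{\ell^2}$ cleanly through the chain of inverse inequalities, making sure every constant is uniform under shape-regularity. The subtle point is the face inverse inequality: it must be applied to the discrete jump on $F$ and scaled correctly, because $F$ is a $(d{-}1)$-dimensional object with diameter comparable to $h_K$, which is what produces the $-(d-1)/r$ exponent rather than a $d$-dimensional scaling. One must also be careful that the sum over $\calFKi$ captures all interfaces contributing to the degrees-of-freedom differences in \eqref{eq:dof_difference}, so that no jump term is omitted; this is handled by the definition $\calFKi=\cup_{i\in\calN}\calF_{\sfa(K,i)}\upint$ together with the fact that $\card(\calC_a)\ge 2$ on exactly those faces where averaging is nontrivial.
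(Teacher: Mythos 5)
Your overall strategy coincides with the paper's: reduce to the degrees of freedom via the norm equivalence \eqref{eq:Linfty_ddl}, compute the identity for $\sigma_{K,i}\big(v_{|K}-\calJ_h\upav(v)_{|K}\big)$ (this is exactly \eqref{eq:sigma_e_Osw} in the paper), invoke \eqref{jump_sigma_by_jump_trace}, and recover the general $(m,p,r)$ case by inverse inequalities on $K$ and on $F$. Your bookkeeping of the exponents $-m$, $d/p$ and $-(d-1)/r$ is correct and in fact more explicit than the paper's one-line appeal to standard inverse inequalities.

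There is, however, one genuine gap at the heart of the argument. The assumption \eqref{jump_sigma_by_jump_trace} only controls $|\sigma_{K,i}(v_{|K})-\sigma_{K',i'}(v_{|K'})|$ when $K$ and $K'$ \emph{share a face} $F=\partial K\cap\partial K'\in\calF_{\sfa(K,i)}\upint$. In the sum over $(K',i')\in\calC_{\sfa(K,i)}$, the cell $K'$ is in general not face-adjacent to $K$: for a Lagrange vertex degree of freedom, say, $\calC_a$ contains every cell around the vertex, most of which touch $K$ only at that vertex (or along an edge in 3D). Your sentence ``each term on the right is controlled by \eqref{jump_sigma_by_jump_trace} \dots over the relevant interface $F$'' therefore does not apply as written. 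The paper closes this gap with a telescoping argument: for each $(K',i')\in\calC_{\sfa(K,i)}$ one chooses a path of cells in $\calT_K$ joining $K$ to $K'$ such that any two consecutive cells in the path share a face in $\calF_{\sfa(K,i)}\upint$ and carry degrees of freedom in the same connectivity class, writes $\sigma_{K,i}(v_{|K})-\sigma_{K',i'}(v_{|K'})$ as a telescoping sum of face-adjacent differences, and applies \eqref{jump_sigma_by_jump_trace} to each link; the number of links is uniformly bounded by shape-regularity, and summing over the links is precisely what produces the sum over all of $\calFKi$ in \eqref{eq:bnd_Osw_RT_N}. Once you insert this step, the rest of your proof goes through.
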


\begin{proof} We only prove the bound for $m=0$ and $p=r=\infty$, the
  other cases follow by invoking standard inverse inequalities. Let
  $v\in P\upb(\calT_h)$, set $e = v-\calJ_h\upav(v)$ and
  observe that $e\in P\upb(\calT_h)$. Let $K\in\calT_h$. The bound~\eqref{eq:Linfty_ddl} implies that
  \[
\|e\|_{L^\infty(K;\Real^q)} \le c \, \|\polA_K^{-1}\|_{\ell^2}\sum_{i\in
    \calN} |\sigma_{K,i}(e_{|K})|.
\]
Owing to the definition of $\calJ_h\upav$ in \eqref{def_Jh_upd}, we first observe that
\begin{equation} \label{eq:sigma_e_Osw}
\sigma_{K,i}(e_{|K}) = \frac{1}{\card(\calC_{\sfa(K,i)})} 
\sum_{(K',i')\in\calC_{\sfa(K,i)}} \left(\sigma_{K,i}(v_{|K})-\sigma_{K',i'}(v_{|K'}) \right).
\end{equation}
Note that $\sigma_{K,i}(e_{|K})=0$ if $\card(\calC_{\sfa(K,i)})=1$
(see Remark~\ref{Rem:Card_Ta_is_one}). Let us now consider the case
$\card(\calC_{\sfa(K,i)})\ge 2$.  For all
$(K',i')\in\calC_{\sfa(K,i)}$, there is a path of mesh cells in
$\calT_K$ linking $K$ to $K'$ so that any two consecutive mesh cells
in the path share a common face $F\in\calF_{\sfa(K,i)}\upint$, and
each face is crossed only once.  Furthermore, if
$(K_l,i_l),(K_r,i_r)\in \calC_{\sfa(K,i)}$ are such that $\partial
K_l\cap \partial K_r = F\in \calF_{\sfa(K,i)}\upint$, then
\eqref{jump_sigma_by_jump_trace} implies that
\[
|\sigma_{K_l,i_l}(v_{|K_l})-\sigma_{K_r,i_r}(v_{|K_r})| \le c
\min(\|\polA_{K_l}\|_{\ell^2},\|\polA_{K_r}\|_{\ell^2})\|\jump{v}_F^\gamma\|_{L^\infty(F;\Real^t)}.
\]  
As a result, 
\[
\|e\|_{L^\infty(K;\Real^q)} \le c \, 
\|\polA_{K}^{-1}\|_{\ell^2}
\|\polA_{K}\|_{\ell^2}
\sum_{F\in \calF_{\sfa(K,i)}\upint} 
\|\jump{v}_F^\gamma\|_{L^\infty(F;\Real^t)},
\]
whence the estimate~\eqref{eq:bnd_Osw_RT_N} readily follows since
$\calFK\upint:= \cup_{i\in\calN}\calF_{\sfa(K,i)}\upint$, $\card(\calN)$ is
uniformly bounded, and the mesh sequence $\famTh$ is shape-regular.
\end{proof}

\section{Quasi-interpolation operator} \label{Sec:quasi_interpolation_operator}
Let $\inter_h^\sharp :L^1(\Dom;\Real^q)
\to P\upb(\calT_h)$ be such that
$\inter_h^\sharp(v)_{|K} = \inter_K^\sharp(v_{|K})$ for all $K\in
\calT_h$.  We now construct the global quasi-interpolation operator
$\inter_h\upav : L^1(\Dom;\Real^q) \to P(\calT_h)$ by setting
\begin{equation}
\inter_h\upav := \calJ_h\upav\circ\inter_h^\sharp.
\end{equation}
Note that $P(\calT_h)$ is point-wise invariant under $\inter_{h}\upav$
since $P(\calT_h)$ is point-wise invariant under
$\calJ_{h}\upav$ and $\inter_h^\sharp$. Hence, $\inter_{h}\upav$ is a projection, \ie
$(\inter_{h}\upav)^2=\inter_{h}\upav$.

\begin{lemma}[$W^{m,p}$-stability] \label{Lem:Lp_stability_inter_upav}
There exists a uniform constant $c$ such that 
\begin{equation}
  |\inter_h\upav(v)|_{W^{m,p}(K;\Real^q)}  \le c\, | v|_{W^{m,p}(\Dom_K;\Real^q)}, \label{eq:app.Ih.upav_RT_N}
\end{equation}
for all $p\in[1,\infty]$, all $m\in\intset{0}{k+1}$, all $v\in W^{m,p}(\Dom_K;\Real^q)$, and all $K\in \calT_h$.
\end{lemma}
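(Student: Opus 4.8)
The plan is to exploit the factorization $\inter_h\upav = \calJ_h\upav\circ\inter_h^\sharp$ together with the splitting
\[
\inter_h\upav(v) = \inter_h^\sharp(v) + \bigl(\calJ_h\upav(\inter_h^\sharp(v)) - \inter_h^\sharp(v)\bigr),
\]
which is legitimate since $\inter_h^\sharp(v)\in P\upb(\calT_h)$. The case $m=0$ is merely $L^p$-stability: it follows directly from Lemma~\ref{lem:stab_Oswald} applied to $u:=\inter_h^\sharp(v)$ and the cellwise $L^p$-stability of $\inter_K^\sharp$ from Proposition~\ref{Prop:Intersharp_bounded_commmutes_invariant}(i), which together give $\|\inter_h\upav(v)\|_{L^p(K;\Real^q)}\le c\|\inter_h^\sharp(v)\|_{L^p(\Dom_K;\Real^q)}\le c\|v\|_{L^p(\Dom_K;\Real^q)}$. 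So I would assume $m\ge1$ and bound the two terms of the splitting separately in the $W^{m,p}(K;\Real^q)$-seminorm.

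For the first term I would use the cellwise seminorm-stability of $\inter_h^\sharp$: by the triangle inequality and Theorem~\ref{Th:interLoc_sharp} with $l=m$ (so that $h_K^{l-m}=1$), $|\inter_K^\sharp(v)|_{W^{m,p}(K;\Real^q)} \le |v|_{W^{m,p}(K;\Real^q)} + |v-\inter_K^\sharp(v)|_{W^{m,p}(K;\Real^q)} \le c\,|v|_{W^{m,p}(K;\Real^q)}$, which is already dominated by $c\,|v|_{W^{m,p}(\Dom_K;\Real^q)}$. The second term is the heart of the matter. Lemma~\ref{lem:bnd_Osw_RT_N} applied to $u=\inter_h^\sharp(v)$ with $r=p$ gives
\[
|u-\calJ_h\upav(u)|_{W^{m,p}(K;\Real^q)} \le c\,h_K^{\frac1p-m}\sum_{F\in\calFKi}\|\jump{u}_F^\gamma\|_{L^p(F;\Real^t)},
\]
so everything reduces to estimating the $\gamma$-jumps of $u$ across the interfaces of the patch.

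For this I would use that $v\in W^{m,p}(\Dom_K;\Real^q)\subset W^{1,1}$ across each interface, whence $\jump{v}_F^\gamma=0$ by \eqref{eq:gamma_trace_0}; therefore $\jump{u}_F^\gamma=\jump{u-v}_F^\gamma$, and the jump-domination assumption $|\jump{w}_F^\gamma|\le|\jump{w}_F|$, applied to $w=u-v\in W^{1,1}(\calT_h;\Real^q)$, yields, for $F=\partial K_l\cap\partial K_r$,
\[
\|\jump{u}_F^\gamma\|_{L^p(F;\Real^t)} \le \sum_{K'\in\{K_l,K_r\}}\|(\inter_{K'}^\sharp(v)-v)_{|K'}\|_{L^p(F;\Real^q)}.
\]
Each summand is the trace on $F$ of the cellwise interpolation error $g:=\inter_{K'}^\sharp(v)-v\in W^{1,p}(K';\Real^q)$, which I would control by a scaled (multiplicative) trace inequality on the shape-regular cell $K'$, obtained by pulling back to $\wK$,
\[
\|g\|_{L^p(F;\Real^q)}\le c\bigl(h_{K'}^{-\frac1p}\|g\|_{L^p(K';\Real^q)} + h_{K'}^{1-\frac1p}|g|_{W^{1,p}(K';\Real^q)}\bigr).
\]

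The main obstacle is precisely to close the scaling cleanly through this trace step for all $p\in[1,\infty]$. Feeding in Theorem~\ref{Th:interLoc_sharp} with $l=m$ and the two target indices $0$ and $1$ gives $\|g\|_{L^p(K')}\le c\,h_{K'}^{m}|v|_{W^{m,p}(K')}$ and $|g|_{W^{1,p}(K')}\le c\,h_{K'}^{m-1}|v|_{W^{m,p}(K')}$, so both contributions in the trace inequality scale as $h_{K'}^{m-\frac1p}$, whence $\|\jump{u}_F^\gamma\|_{L^p(F;\Real^t)}\le c\,h_K^{m-\frac1p}\sum_{K'\in\{K_l,K_r\}}|v|_{W^{m,p}(K';\Real^q)}$ using $h_{K'}\simeq h_K$ by shape-regularity. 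Inserting this exactly cancels the prefactor $h_K^{\frac1p-m}$ from Lemma~\ref{lem:bnd_Osw_RT_N}; since all cells underlying $\calFKi$ lie in $\calT_K$ and $\card(\calN)$ is uniformly bounded, summing over $F$ produces $c\,|v|_{W^{m,p}(\Dom_K;\Real^q)}$, which combined with the first-term bound yields the claim. I would verify the endpoints $p=1$ and $p=\infty$ directly, the latter reducing the trace step to $\|g\|_{L^\infty(F)}\le\|g\|_{L^\infty(K')}$, for which the power counting still closes.
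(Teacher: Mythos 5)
Your proposal is correct and follows essentially the same route as the paper: the case $m=0$ via Proposition~\ref{Prop:Intersharp_bounded_commmutes_invariant} and Lemma~\ref{lem:stab_Oswald}, and for $m\ge1$ the splitting into $\inter_K^\sharp(v)$ plus the averaging correction, with the first term controlled by Theorem~\ref{Th:interLoc_sharp} ($l=m$) and the second by Lemma~\ref{lem:bnd_Osw_RT_N} with $r=p$, the vanishing $\gamma$-jumps of $v$, the trace inequality of Lemma~\ref{Lem:trace_inequality_in_Wsp} with $s=1$, and the local interpolation estimates. The power counting you carry out explicitly is exactly the computation the paper performs.
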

\begin{proof} 
For $m=0$, the estimate follows by combining 
Proposition~\ref{Prop:Intersharp_bounded_commmutes_invariant}
with Lemma~\ref{lem:stab_Oswald}.
For $m\ge1$, the triangle inequality implies that
\[
|\inter_h\upav(v)|_{W^{m,p}(K;\Real^q)} \le
|\inter_K^\sharp(v)|_{W^{m,p}(K;\Real^q)} +
|\inter_K^\sharp(v)-\calJ_h\upav(\inter_K^\sharp(v))|_{W^{m,p}(K;\Real^q)}.
\]
Let $\term_1$ and $\term_2$ be the two terms on the right-hand side of
the above inequality. $\term_1$ is estimated using
Theorem~\ref{Th:interLoc_sharp} with $l=m$, yielding
$|\term_1| \leq c 
|v|_{W^{m,p}(K;\Real^q)}$.
$\term_2$ is estimated using Lemma~\ref{lem:bnd_Osw_RT_N} and the
fact that $v\in W^{m,p}(\Dom_K;\Real^q)\subset W^{1,1}(\Dom_K;\Real^q)$ has zero
$\gamma$-jumps across interfaces (see~\eqref{eq:gamma_trace_0}). More precisely, we have
\begin{align*}
 h_K^m |\term_2| &\le c h_K^{\tfrac1p} \sum_{F\in\calFKi} \|\jump{\inter_K^\sharp(v)}_F^\gamma\|_{L^p(F;\Real^t)} 
=  c h_K^{\tfrac1p} \sum_{F\in\calFKi} \|\jump{v-\inter_K^\sharp(v)}_F^\gamma\|_{L^p(F;\Real^t)} \\ 
&\le c  h_{K}^{\tfrac1p}\!\!\sum_{K'\in\calT_K} \sum_{F\subset \partial K'
   \cap \calFK\upint}\!\!\!\!  \|(v-\inter_K^\sharp(v))_{|K'}\|_{L^p(F;\Real^q)}
 \le c h_{K}^{m}\!\!  \sum_{K'\in\calT_K} |v|_{W^{m,p}(K';\Real^q)},
\end{align*}
where we have used the triangle inequality to bound the jump by the
values over the two adjacent mesh cells, the 
trace inequality from Lemma~\ref{Lem:trace_inequality_in_Wsp} with $s=1$,
the approximation
result of Theorem~\ref{Th:interLoc_sharp}, and the shape regularity of the mesh
sequence. Combining the bounds on $\term_1$ and $\term_2$
gives~\eqref{eq:app.Ih.upav_RT_N}.
\end{proof}

Let us now estimate the approximation properties of
  $\inter_h\upav$.  Since we are going to establish error estimates
in fractional Sobolev spaces, in the rest of the paper all the
results that are stated in fractional Sobolev spaces assume that
$p\in [1,\infty)$, whereas the results with integer degree of
  smoothness hold for $p\in [1,\infty]$.  Assuming that
$r\not\in\polN$ and denoting by $\floor{r}$ the largest integer less
than or equal to $r$, we consider the so-called Sobolev--Slobodeckij
norm defined as
$\|v\|_{W^{r,p}(\Dom;\Real^q)} = (\|v\|_{W^{\lfloor
    r\rfloor,p}(\Dom;\Real^q)}^p +
|v|_{W^{r,p}(\Dom;\Real^q)}^p)^{\frac1p}$ with
\begin{equation}
|v|_{W^{r,p}(\Dom;\Real^q)} = \left(
\sum_{|\alpha|=\lfloor r\rfloor} \int_{\Dom}\!\int_{\Dom} 
\frac{\|\partial^\alpha v(\bx)-\partial^\alpha v(\by)\|_{\ell^2(\Real^q)}^p}{\|\bx-\by\|_{\ell^2(\Real^d)}^{(r-\floor{r})p+d}}\dif x\dif y\right)^{\frac1p}.
\end{equation}

\begin{theorem}[Local approximation in fractional Sobolev spaces] \label{Thm:fractional_estimate_in_Lp}
There exists a uniform constant $c$ such that 
\begin{equation}
|v-\inter_h\upav(v)|_{W^{m,p}(K;\Real^q)} \le c \, h_{K}^{r-m}|v|_{W^{r,p}(D_K;\Real^q)}, \label{eq:est_Iav_Wrp}
\end{equation} 
for all $r\in [0,k+1]$, all
$m\in\intset{0}{\lfloor r\rfloor}$, all $p\in [1,\infty)$ if $r\not\in\polN$ or all $p\in [1,\infty]$ if $r\in\polN$, all $v\in W^{r,p}(D_K;\Real^q)$, and all
$K\in\calT_h$.
\end{theorem}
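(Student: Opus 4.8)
The plan is to establish the fractional estimate \eqref{eq:est_Iav_Wrp} by interpolating between the integer-order results already in hand and then handling the genuinely fractional case by a direct argument on the patch $D_K$. The two pillars are Theorem~\ref{Th:interLoc_sharp} (giving local estimates for $\inter_K^\sharp$ in integer Sobolev norms) and Lemma~\ref{Lem:Lp_stability_inter_upav} (giving $W^{m,p}$-stability of $\inter_h\upav$ over the patch $D_K$). For integer $r$ the estimate should essentially follow the proof scheme of Lemma~\ref{Lem:Lp_stability_inter_upav}: split the error $v - \inter_h\upav(v)$ using the polynomial invariance of $\inter_h\upav$ (it reproduces $[\polP_k]^q$ pointwise, since both $\inter_h^\sharp$ and $\calJ_h\upav$ do), subtract an optimal local polynomial approximation $\pi$ of $v$ on $D_K$, and write $v - \inter_h\upav(v) = (v - \pi) - \inter_h\upav(v - \pi)$, then apply stability to the second term.

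Concretely, first I would fix $r \in [0,k+1]$, let $\lfloor r \rfloor$ be its integer part and $m \in \intset{0}{\lfloor r\rfloor}$. For $r \in \polN$ the argument is direct: choose $\pi \in [\polP_{r-1}]^q$ (or more precisely the polynomial realizing the infimum in the Bramble--Hilbert estimate on $D_K$) so that $\inter_h\upav(v) - \pi = \inter_h\upav(v-\pi)$ by polynomial invariance, giving
\[
|v - \inter_h\upav(v)|_{W^{m,p}(K;\Real^q)} \le |v-\pi|_{W^{m,p}(D_K;\Real^q)} + c\,|v-\pi|_{W^{m,p}(D_K;\Real^q)},
\]
where the second term used \eqref{eq:app.Ih.upav_RT_N}. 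A scaled Bramble--Hilbert/Deny--Lions estimate on $D_K$ then bounds $|v-\pi|_{W^{m,p}(D_K;\Real^q)}$ by $c\,h_K^{r-m}|v|_{W^{r,p}(D_K;\Real^q)}$, which is the claim. The scaling in $h_K$ follows from shape-regularity, since $D_K$ is a union of a uniformly bounded number of cells each comparable to $h_K$.

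For the fractional case $r \notin \polN$, the same splitting applies with $\pi \in [\polP_{\lfloor r\rfloor}]^q$, reducing the problem to estimating $|v-\pi|_{W^{m,p}(D_K;\Real^q)}$ in terms of the Sobolev--Slobodeckij seminorm $|v|_{W^{r,p}(D_K;\Real^q)}$. This is precisely a fractional Bramble--Hilbert/Deny--Lions estimate on the patch $D_K$, and I expect this to be the main obstacle: one must show that the infimum over polynomials of the integer seminorm $|v-\pi|_{W^{m,p}}$ is controlled by the fractional seminorm with the correct power of $h_K$. The natural route is to prove this on a reference configuration of patches (or directly via a Poincar\'e-type inequality in $W^{r,p}$) and transport by scaling; the technical Poincar\'e inequality announced for \S\ref{sec:technical} is almost certainly designed exactly for this purpose. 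Care is needed because $D_K$ varies with $K$, so one cannot literally pull back to a single reference patch; instead the fractional Poincar\'e inequality must be stated with explicit $h_K$-dependence on the shape-regular patch, and then combined with the invariance-plus-stability splitting above to conclude \eqref{eq:est_Iav_Wrp}.
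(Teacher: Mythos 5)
Your proposal follows essentially the same route as the paper: polynomial invariance of $\inter_h\upav$ plus the $W^{m,p}$-stability of Lemma~\ref{Lem:Lp_stability_inter_upav} reduce the estimate to $\inf_{g}|v-g|_{W^{m,p}(D_K;\Real^q)}$ over $[\polP_{k,d}]^q$, and the remaining ``main obstacle'' you identify is precisely the paper's Lemma~\ref{lem:pol_app_DK}, proved exactly as you anticipate via a moment-based polynomial projection, the patch Poincar\'e inequality of Lemma~\ref{lem:PK_DK}, and the fractional Poincar\'e inequality of Lemma~\ref{Lem:fracional_approx_in_LP} with explicitly tracked constants (avoiding a reference patch). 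No substantive difference.
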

\begin{proof}
Using that $\inter_h\upav(g)=g$ for all $g\in[\polP_{k,d}]^q$ together 
with the stability of $\inter_h\upav$ in the $W^{m,p}$-seminorm
and the triangle inequality, we infer that
\begin{align*}
|v-\inter_h\upav(v)|_{W^{m,p}(K;\Real^q)} 
&= |v -g -\inter_h\upav(v-g) |_{W^{m,p}(K;\Real^q)} \\
& \le |v -g|_{W^{m,p}(K;\Real^q)}  +|\inter_h\upav(v-g) |_{W^{m,p}(K;\Real^q)} \\
&\le c\, |v -g|_{W^{m,p}(D_K;\Real^q)}.
\end{align*}
That is to say,
$|v-\inter_h\upav(v)|_{W^{m,p}(K;\Real^q)}  \le c
\inf_{g\in[\polP_{k,d}]^q} |v -g|_{W^{m,p}(D_K;\Real^q)}$.
We conclude by applying Lemma~\ref{lem:pol_app_DK} componentwise.
\end{proof}

\begin{lemma}[Polynomial approximation in $\Dom_K$]\label{lem:pol_app_DK}
The following holds:
\begin{equation} \label{eq:pol_app_DK}
\inf_{g\in\polP_{k,d}} |v -g|_{W^{m,p}(D_K)} \le
c\, h_{K}^{r-m}|v|_{W^{r,p}(D_K)},
\end{equation}
for all $r\in [0,k+1]$, all
$m\in\intset{0}{\lfloor r\rfloor}$, all $p\in [1,\infty)$ if $r\not\in\polN$ or all $p\in [1,\infty]$ if $r\in\polN$, all $v\in W^{r,p}(D_K)$, and all
$K\in\calT_h$.
\end{lemma}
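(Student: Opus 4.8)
The plan is to prove a polynomial approximation (Bramble–Hilbert type) estimate on the macro-domain $D_K$. Let me think about the key difficulty: $D_K$ is a union of mesh cells, not a single reference cell, so I cannot simply pull back to $\wK$. The standard Deny–Lions / Bramble–Hilbert lemma gives polynomial approximation on a fixed domain, but here the domain $D_K$ varies with $K$ and $h$, and I need the constant to be uniform and the $h_K$-scaling to be explicit. The shape-regularity of the mesh sequence controls the geometry of $D_K$: it is contained in a ball of radius $c\,h_K$ and contains a ball of radius $c'\,h_K$ (since $\calT_K$ has uniformly bounded cardinality and each cell is shape-regular), so $D_K$ is a domain that, after scaling by $h_K^{-1}$, is uniformly bounded with a uniformly-controlled "shape."

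The approach I would take is a scaling argument combined with a Bramble–Hilbert lemma on fractional Sobolev spaces. First I would rescale $D_K$ to a reference-size domain $\widehat{D}_K := h_K^{-1} D_K$ (translating so some fixed point goes to the origin). Under this dilation, the $W^{r,p}$ and $W^{m,p}$ seminorms scale by explicit powers of $h_K$: specifically $|v|_{W^{m,p}(D_K)} = h_K^{m - d/p}\,|\widehat v|_{W^{m,p}(\widehat D_K)}$ for integer $m$, and the analogous relation with exponent $r - d/p$ for the fractional seminorm (the Gagliardo seminorm scales the same way because the exponent $(r-\floor r)p + d$ in the denominator together with the two volume integrals produces exactly the factor $h_K^{-(r-d/p)p}$). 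So it suffices to prove the estimate on $\widehat D_K$ with the constant independent of $K$, and then the powers of $h_K$ reassemble to give the factor $h_K^{r-m}$.

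Second, on the rescaled domain $\widehat D_K$ I would invoke a fractional Bramble–Hilbert / Deny–Lions lemma: there is a constant $c$, depending only on the "shape" of the domain (here uniformly controlled by shape-regularity), such that $\inf_{g\in\polP_{k,d}} |\widehat v - g|_{W^{m,p}(\widehat D_K)} \le c\,|\widehat v|_{W^{r,p}(\widehat D_K)}$ for $m \le \floor r \le r \le k+1$. The two pieces I need for this are: (i) the fractional Sobolev seminorm $|\SCAL|_{W^{r,p}(\widehat D_K)}$ vanishes exactly on $\polP_{k,d}$ when $r\le k+1$ (since differentiating $\floor r$ times a polynomial of degree $\le k$ leaves something of degree $\le k - \floor r$, and the remaining fractional difference quotient of a polynomial of that degree, when $r - \floor r < 1$ and $r \le k+1$, still has finite seminorm but the quotient itself controls the approximation), and (ii) a compactness / Peetre-type argument giving the norm equivalence on the quotient space $W^{r,p}/\polP_{k,d}$. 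The uniformity of $c$ over the family $\{\widehat D_K\}$ follows because all these rescaled domains are contained in and contain balls of fixed radii and are unions of at most $N$ affine images of $\wK$ with uniformly bounded Jacobians, so one can either use a finite number of reference configurations or a direct estimate on a fixed ball containing all of them.

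The main obstacle I expect is establishing the uniformity of the Bramble–Hilbert constant over the varying domains $\widehat D_K$, together with getting the fractional scaling exactly right. The domain $\widehat D_K$ is not a single fixed reference set — its combinatorial type (how many cells, how they are glued) depends on $K$ — so a naive "pull back to one reference domain" does not work. The clean way around this is to bound the fractional seminorm on $\widehat D_K$ from below by the seminorm on a fixed ball $\widehat B$ containing $\widehat D_K$, combined with an extension operator $W^{r,p}(\widehat D_K) \to W^{r,p}(\widehat B)$ whose norm is uniformly bounded thanks to shape-regularity (such uniform extension operators exist for this class of uniformly Lipschitz domains). Once the problem is transplanted to the fixed ball $\widehat B$, the classical fractional Deny–Lions lemma applies with a single constant, and then restricting back to $\widehat D_K$ and undoing the scaling completes the argument. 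Verifying the extension operator's uniform bound and the exact scaling exponent for the Gagliardo seminorm are the technical points that require care, but neither is conceptually deep.
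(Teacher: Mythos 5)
Your overall strategy (dilate $D_K$ by $h_K^{-1}$ and prove a Bramble--Hilbert estimate on the rescaled patch $\widehat D_K$ with a constant controlled only by shape-regularity) is the right shape of argument, and your scaling computation for the Gagliardo seminorm is correct. The gap is in the step that transplants the problem to a fixed ball $\widehat B$ via a uniform extension operator $E$. Extension operators of Stein or Calder\'on type bound the \emph{full} norm, $\|Ev\|_{W^{r,p}(\widehat B)}\le C\|v\|_{W^{r,p}(\widehat D_K)}$, not the seminorm; no linear extension can satisfy $|Ev|_{W^{r,p}(\widehat B)}\le C|v|_{W^{r,p}(\widehat D_K)}$ in general (extend a constant: the left side need not vanish). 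Consequently, after applying Deny--Lions on $\widehat B$ and restricting, your chain of inequalities only yields $\inf_g|v-g|_{W^{m,p}(\widehat D_K)}\le c\,\|v\|_{W^{r,p}(\widehat D_K)}$, and undoing the dilation turns the full norm into $\bigl(\sum_{0\le j\le \lfloor r\rfloor} h_K^{(j-m)p}|v|_{W^{j,p}(D_K)}^p + h_K^{(r-m)p}|v|_{W^{r,p}(D_K)}^p\bigr)^{1/p}$, whose $j=0$ term $h_K^{-m}\|v\|_{L^p(D_K)}$ does not even decay. To repair this you must first subtract a polynomial $q$ with $\|v-q\|_{W^{r,p}(\widehat D_K)}\le c\,|v|_{W^{r,p}(\widehat D_K)}$ uniformly in $K$ --- but that quotient-norm estimate on the varying, generally non-star-shaped patch $\widehat D_K$ is precisely the content of the lemma, so the extension step has not reduced the difficulty. (The uniform boundedness of $E$ over the family $\{\widehat D_K\}$ is also asserted rather than proved; it is essentially the ``reference patch'' issue the paper is explicitly designed to avoid.)

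The paper closes exactly this gap by keeping everything explicit and never leaving $D_K$: it takes $q=\pi_\ell(v)$, the polynomial whose derivative averages over $D_K$ match those of $v$ up to order $\ell$, then iterates a Poincar\'e inequality on $D_K$ proved by chaining across the faces shared by the cells of $\calT_K$ (Lemma~\ref{lem:PK_DK}), and finishes with a fractional Poincar\'e inequality whose constant $h_O^{s}\,(h_O^d/\mes{O})^{1/p}$ is explicit and valid on an arbitrary bounded open set (Lemma~\ref{Lem:fracional_approx_in_LP}), so that shape-regularity alone controls it. No extension operator and no compactness argument appear. If you want to salvage your structure, replace the extension step by these two Poincar\'e inequalities applied to the derivatives $\partial^\alpha(v-\pi_\ell(v))$ with $|\alpha|=\ell$.
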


\begin{proof}
We  proceed as in \cite[Thm.~1]{MR41:7819}, but instead of invoking
\cite[Thm.~3.6.11]{Morrey_1966}, where the constants may depend on $D_K$, we are going to track the
constants to make sure that they are independent of $D_K$.  
If $m=r$, there is nothing to prove. Let us assume that $m< r$. Let $\ell\in\Natural$ be such
that $\ell=r-1$ if $r$ is a natural number or $\ell=\floor{r}$
otherwise (note that $1\le r$ if $r$ is a natural number since we assumed that 
$0\le m< r$).
In both cases the integer $\ell$ is such that $m\le \ell\le k$.  Let
$\calA_{\ell,d}=\{\alpha\in\Natural^d\st
|\alpha|:=\alpha_1+\ldots+\alpha_d\le \ell\}$.
Note that
$\card(\calA_{\ell,d})=\dim(\polP_{\ell,d})={\ell+d \choose
  d}=:N_{\ell,d}$.
Since the mapping $\Phi_{\ell,d}:\polP_{\ell,d}\to \Real^{N_{\ell,d}}$
such that
$\Phi_{\ell,d}(q)=(\int_{\Dom_K} \partial^\alpha q\dif
x)_{\alpha\in\calA_{\ell,d}}$
is an isomorphism, there is a unique polynomial
$\pi_\ell(v)\in\polP_{\ell,d}$ such that
$\Phi_{\ell,d}(\pi_\ell(v))= (\int_{\Dom_K} \partial^\alpha v\dif
x)_{\alpha\in\calA_{\ell,d}}$,
\ie $\int_{\Dom_K} \partial^\alpha (v-\pi_\ell(v))\dif x=0$ for all
$\alpha\in \calA_{\ell,d}$ (this result is actually stated in
\cite[Thm.~3.6.10]{Morrey_1966}).

Since by definition
$\int_{D_K} \partial^\alpha(v-\pi_\ell(v))\dif x =0$ for all
$|\alpha|=m\le \ell$, we can apply Lemma~\ref{lem:PK_DK} below, \ie there is
a uniform constant $c$ such that
$|v-\pi_\ell(v)|_{W^{m,p}(\Dom_K)} \le c h_K
|v-\pi_\ell(v)|_{W^{m+1,p}(\Dom_K)}$.
We can repeat the argument if $m+1 \le \ell$ since in this case we
also have $\int_{D_K} \partial^\alpha(v-\pi_\ell(v))\dif x =0$ for all
$|\alpha|=m+1\le \ell$. Eventually, we obtain
\[
|v-\pi_\ell(v)|_{W^{m,p}(\Dom_K)} \le c h_K^{\ell-m} |v-\pi_\ell(v)|_{W^{\ell,p}(\Dom_K)}.
\]

If $r$ is a natural number, then $\ell+1=r$, and we can apply the above
argument one last time since
$\int_{D_K} \partial^\alpha(v-\pi_\ell(v))\dif x =0$ for all
$|\alpha|=\ell$, which then gives~\eqref{eq:pol_app_DK} because
$\partial^\alpha\pi_\ell(v) =0$ for all $|\alpha|=\ell+1$. Otherwise,
$\ell=\floor{r}$ and we apply Lemma~\ref{Lem:fracional_approx_in_LP} to all the partial
derivatives $\partial^{\alpha} (v-\pi_\ell(v))$ with $|\alpha|=\ell$,
$s=r-\floor{r}\in(0,1)$ and $O:=D_K$;
this is legitimate since all these partial derivatives have zero average
over $D_K$. We infer that there is $c$ uniform with respect to $s$,
$p$, $K$, and $v$ such that
\[
|v-\pi_\ell(v)|_{W^{m,p}(\Dom_K;\Real^q)} \le c \, h_K^{\floor{r}-m}  h_{D_K}^{r-\floor{r}}
\left(\frac{h_{D_K}^d}{\mes{D_K}}\right)^{\frac{1}{p}} |v-\pi_\ell(v)|_{W^{r,p}(D_K;\Real^q)}.
\]
Note that $|v-\pi_\ell(v)|_{W^{r,p}(D_K;\Real^q)}=|v|_{W^{r,p}(D_K;\Real^q)}$ since $\partial^{\alpha}\pi_\ell(v)$ is a constant 
in $\Real^q$ for all $|\alpha|=\ell$.
We conclude that~\eqref{eq:pol_app_DK} holds owing to the
shape-regularity of the mesh sequence.
\end{proof}

\begin{corollary}[Global best approximation in $L^p$] \label{cor:glob_best_app}
There exists a uniform constant $c$ such that 
\begin{equation} \label{Eq:Cor:interpolation_RT_N}
\inf_{w_h\in P(\calT_h)} \|v -w_h \|_{L^{p}(\Dom;\Real^q)}
\le c\, h^{r} |v|_{W^{r,p}(\Dom;\Real^q)},
\end{equation}
for all $r\in [0,k+1]$, all $p\in [1,\infty)$ if $r\not\in\polN$ or all $p\in [1,\infty]$ if $r\in\polN$, and all $v\in W^{r,p}(\Dom;\Real^q)$.
\end{corollary}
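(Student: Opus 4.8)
The plan is to derive the global best-approximation estimate \eqref{Eq:Cor:interpolation_RT_N} directly from the local estimate in Theorem~\ref{Thm:fractional_estimate_in_Lp}, specialized to $m=0$, by summing the local contributions over the mesh and controlling the overlap of the local patches $D_K$. Since $\inter_h\upav(v)\in P(\calT_h)$ for every $v\in L^1(\Dom;\Real^q)$, we have
\[
\inf_{w_h\in P(\calT_h)} \|v-w_h\|_{L^p(\Dom;\Real^q)} \le \|v-\inter_h\upav(v)\|_{L^p(\Dom;\Real^q)},
\]
so it suffices to bound the right-hand side. The case $p=\infty$ (for $r\in\polN$) is immediate, since taking the supremum of the local bound $\|v-\inter_h\upav(v)\|_{L^\infty(K;\Real^q)} \le c\, h_K^{r}|v|_{W^{r,\infty}(D_K;\Real^q)}$ over all $K\in\calT_h$, together with $h_K\le h$ and $|v|_{W^{r,\infty}(D_K;\Real^q)}\le |v|_{W^{r,\infty}(\Dom;\Real^q)}$, gives the result directly.

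For $p\in[1,\infty)$ I would raise the local estimate to the power $p$, sum over $K\in\calT_h$, and use $h_K\le h$ to obtain
\[
\|v-\inter_h\upav(v)\|_{L^p(\Dom;\Real^q)}^p
= \sum_{K\in\calT_h} \|v-\inter_h\upav(v)\|_{L^p(K;\Real^q)}^p
\le c^p\, h^{rp} \sum_{K\in\calT_h} |v|_{W^{r,p}(D_K;\Real^q)}^p.
\]
The remaining task is to recognize $\sum_{K\in\calT_h} |v|_{W^{r,p}(D_K;\Real^q)}^p$ as being controlled by a uniform multiple of the \emph{global} seminorm $|v|_{W^{r,p}(\Dom;\Real^q)}^p$. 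The key point is the finite-overlap property: because the mesh sequence is shape-regular, the number of patches $D_K$ containing any fixed cell $K'$ (equivalently, the number of cells $K$ whose patch $D_K$ overlaps a fixed cell) is bounded by a uniform constant $N^\flat$ independent of $h$. Hence each cell, and each pair of cells contributing to the double integral defining the fractional seminorm, is counted a uniformly bounded number of times, so $\sum_{K\in\calT_h} |v|_{W^{r,p}(D_K;\Real^q)}^p \le N^\flat\, |v|_{W^{r,p}(\Dom;\Real^q)}^p$. Taking $p$-th roots and absorbing $N^\flat$ into $c$ yields~\eqref{Eq:Cor:interpolation_RT_N}.

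The main subtlety is the finite-overlap argument for the fractional seminorm when $r\notin\polN$: unlike integer seminorms, the Sobolev--Slobodeckij seminorm is a \emph{double} integral over $D_K\times D_K$, so one must verify that summing $\int_{D_K}\!\int_{D_K}$ over $K$ is dominated by $\int_{\Dom}\!\int_{\Dom}$ up to a uniform factor. This holds because each product cell $K'\times K''$ appears in the patch product $D_K\times D_K$ only when both $K'$ and $K''$ lie in $D_K$, and the number of such $K$ is bounded by $(N^\flat)^2$ by shape-regularity; I would state this overlap bound explicitly (or cite it as a standard consequence of shape-regularity) as the one step that genuinely requires care beyond routine summation.
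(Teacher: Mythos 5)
Your proof is correct and is exactly the argument the paper intends: the corollary is stated as an immediate consequence of Theorem~\ref{Thm:fractional_estimate_in_Lp} with $m=0$, obtained by taking $w_h=\inter_h\upav(v)$, summing the $p$-th powers of the local bounds over the cells, and invoking the uniformly bounded overlap of the patches $D_K$ (the paper only sketches an alternative via real interpolation in a remark). Your explicit treatment of the double-integral overlap count for the Sobolev--Slobodeckij seminorm, using that $\card(\calT_K)$ is uniformly bounded by shape-regularity, correctly fills in the one step the paper leaves implicit.
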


\begin{remark}[Interpolation] 
Corollary~\ref{cor:glob_best_app} can also be proved 
using~\eqref{eq:est_Iav_Wrp} for $r\in \Natural$
and the real interpolation method (\ie the $K$-method), see
\eg \cite[Chap.~22]{Tartar:07}.
\end{remark}


\begin{remark}[Approximation for $\inter_K^\sharp$] \label{Rem:fractiona_norm_inter_sharp} 
  Note in passing that Theorem~\ref{Th:interLoc_sharp}, which states the approximation properties of $\inter_K^\sharp$, can
  be re-written with fractional Sobolev norms, \ie the following
  also holds:
\begin{equation}
|v-\inter_K^\sharp(v)|_{W^{m,p}(K;\Real^q)} \le c  h_{K}^{r-m}|v|_{W^{r,p}(K;\Real^q)},
\end{equation}
for all $r\in [0,k+1]$, all
$m\in\intset{0}{\lfloor r\rfloor}$, all $p\in [1,\infty)$ if $r\not\in\polN$ or all $p\in [1,\infty]$ if $r\in\polN$, all
$v\in W^{r,p}(K;\Real^q)$, and all $K\in\calT_h$.
\end{remark}

\begin{lemma}[Poincar\'e inequality in $\Dom_K$] \label{lem:PK_DK} Let $\underline w_{\Dom_K}$ be the average of
$w$ over $\Dom_K$.
There exists a uniform constant $c$ such that
\begin{equation}
\|v-\underline v_{\Dom_K}\|_{L^p(\Dom_K)} \le c\, h_K|v|_{W^{1,p}(\Dom_K)},
\end{equation} 
for all $p\in [1,\infty]$, all $v\in W^{1,p}(\Dom_K)$, and all $K\in\calT_h$.
\end{lemma}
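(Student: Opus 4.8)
The plan is to reduce the patch inequality to single-cell estimates plus a uniform control of the differences of cell averages, exploiting that $\calT_K$ consists of a uniformly bounded number of shape-regular cells and that $\Dom_K$ is connected. First I would record the local Poincar\'e inequality on each cell $K'\in\calT_K$: since $K'$ is convex with diameter comparable to $h_K$ and is affine-equivalent to the fixed reference cell $\wK$, scaling the reference Poincar\'e inequality gives $\|v-\underline v_{K'}\|_{L^p(K')}\le c\,h_K|v|_{W^{1,p}(K')}$ with $c$ uniform, where $\underline v_{K'}$ denotes the average of $v$ over $K'$. Writing $\|v-\underline v_{\Dom_K}\|_{L^p(K')}\le \|v-\underline v_{K'}\|_{L^p(K')}+\mes{K'}^{\frac1p}|\underline v_{K'}-\underline v_{\Dom_K}|$ and summing over the (uniformly boundedly many) cells $K'\in\calT_K$, the whole estimate reduces, since $\mes{K'}^{\frac1p}\le c\,h_K^{\frac dp}$, to proving $|\underline v_{K'}-\underline v_{\Dom_K}|\le c\,h_K^{1-\frac dp}|v|_{W^{1,p}(\Dom_K)}$ for every $K'\in\calT_K$.

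Next I would control the average differences. Because $\underline v_{\Dom_K}=\sum_{K'\in\calT_K}\frac{\mes{K'}}{\mes{\Dom_K}}\underline v_{K'}$ is a convex combination of the cell averages, it suffices to bound $|\underline v_{K'}-\underline v_{K''}|$ for any two cells in $\calT_K$, and for this it is enough to treat two cells sharing a face. Let $K_1,K_2\in\calT_K$ with $F=\partial K_1\cap\partial K_2$ a common face, and let $\underline v_F$ denote the face average. Inserting $\underline v_F$, H\"older's inequality gives $|\underline v_{K_j}-\underline v_F|\le \mes{F}^{-\frac1p}\|v-\underline v_{K_j}\|_{L^p(F)}$, and the scaled trace inequality on $K_j$ (the case $s=1$ of the trace inequality in \S\ref{sec:technical}), combined with the local Poincar\'e bound above, yields $\|v-\underline v_{K_j}\|_{L^p(F)}\le c\,h_K^{1-\frac1p}|v|_{W^{1,p}(K_j)}$. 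Since $\mes{F}\ge c\,h_K^{d-1}$ by shape-regularity, this gives $|\underline v_{K_j}-\underline v_F|\le c\,h_K^{1-\frac dp}|v|_{W^{1,p}(K_j)}$ and hence $|\underline v_{K_1}-\underline v_{K_2}|\le c\,h_K^{1-\frac dp}|v|_{W^{1,p}(K_1\cup K_2)}$.

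Finally I would chain these face-adjacent increments: fixing $K$ as base cell, every $K'\in\calT_K$ is joined to $K$ by a path of face-adjacent cells lying inside $\calT_K$ whose length is uniformly bounded (this is where shape-regularity and the uniform bound on $\card(\calT_K)$ enter, together with the connectedness of $\Dom_K$), so summing the increments gives $|\underline v_{K'}-\underline v_{K}|\le c\,h_K^{1-\frac dp}|v|_{W^{1,p}(\Dom_K)}$, and likewise $|\underline v_{K}-\underline v_{\Dom_K}|$ is controlled by the same quantity. Combining with the first paragraph and summing over $K'\in\calT_K$ yields the claim; the case $p=\infty$ is handled identically, using $|\underline v_{K_j}-\underline v_F|\le\|v-\underline v_{K_j}\|_{L^\infty(K_j)}$ directly in place of the trace argument. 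The main obstacle is precisely the uniformity of the average-difference bound: a naive Poincar\'e inequality on the varying domain $\Dom_K$ would carry a constant depending on the geometry of the patch, and the face-by-face chaining together with the scaled trace inequality is what keeps the constant independent of $K$ and $h$.
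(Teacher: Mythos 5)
Your proposal is correct and follows essentially the same route as the paper's proof: a convex-cell Poincar\'e inequality on each $K'\in\calT_K$, control of the difference of averages of two face-adjacent cells via the $s=1$ trace inequality of Lemma~\ref{Lem:trace_inequality_in_Wsp} (you insert the face average $\underline v_F$ where the paper uses the identity $v_{|K_l}=v_{|K_r}$ on $F$ directly, which is the same estimate), the convex-combination identity $\underline v_{\Dom_K}=\sum_{K'}\frac{\mes{K'}}{\mes{\Dom_K}}\underline v_{K'}$, and chaining along a uniformly bounded path of face-adjacent cells in $\calT_K$. The exponent bookkeeping ($\mes{F}^{-1/p}$, $\mes{K'}^{1/p}$) checks out, so no changes are needed.
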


\begin{proof}
Let $K\in\calT_h$. Let $K_l,K_r\in\calT_K$ sharing an interface $F=\partial K_l\cap \partial K_r$. We observe that
\begin{align*}
|\underline{v}_{K_l}-\underline{v}_{K_r}| = \mes{F}^{-\frac1p} \|\underline{v}_{K_l}-v_{|K_l}+v_{|K_r}-\underline{v}_{K_r}\|_{L^p(F)},
\end{align*}
since $v_{|K_l}=v_{|K_r}$ on $F$. By using the triangle inequality,
estimating the two norms in $L^p(F)$ with the 
trace inequality~\eqref{Eq:Lem:trace_inequality_in_Wsp} (with $s=1$),
and by applying
the Poincar\'e inequality in $K_l$ and $K_r$ separately (both with
constant $\pi^{-1}$ since the mesh cells are convex sets), we  obtain
$|\underline{v}_{K_l}-\underline{v}_{K_r}| \le c
(h_{K_l}^{1-\frac1p}|v|_{W^{1,p}(K_l)} + h_{K_r}^{1-\frac1p}|v|_{W^{1,p}(K_r)})$.
After invoking the shape regularity of the mesh sequence, we infer that
\begin{equation} \label{eq:PK_saut_moyenne}
\mes{K}^{\frac1p} |\underline{v}_{K_l}-\underline{v}_{K_r}|\le c\,h_K|v|_{W^{1,p}(K_l\cup K_r)}. 
\end{equation}
Observing that $\underline{v}_{\Dom_K}-\underline{v}_{K'} =\sum_{K''\in\calT_K}
  \frac{\mes{K''}}{\mes{\Dom_K}} (\underline{v}_{K''}-\underline{v}_{K'})$
  for any $K'\in\calT_K$, we infer that
\[
\|v-\underline{v}_{\Dom_K}\|_{L^p(K')} \le \|v-\underline{v}_{K'}\|_{L^p(K')}
+ \sum_{K''\in\calT_K} \frac{\mes{K''}}{\mes{\Dom_K}} \mes{K'}^{\frac1p} |\underline{v}_{K''}-\underline{v}_{K'}|.
\]
For any $K''\in\calT_K$, we can find a path of mesh cells in $\calT_K$
linking $K'$ to $K''$ so that any consecutive mesh cells in the path
share a common face and this face is crossed only
  once. Using~\eqref{eq:PK_saut_moyenne} together with the shape
regularity of the mesh sequence, we infer that
$\|v-\underline{v}_{K'}\|_{L^p(K')} \le
ch_K|v|_{W^{1,p}(\Dom_K)}$, and the conclusion follows by summing over
$K'\in\calT_h$ and using the fact that $\card(\calT_K)$ is uniformly
bounded.
\end{proof}

\section{Quasi-interpolation with boundary
  prescription} \label{Sec:boundary_conditions} Our goal in this
section is to construct a variant of the quasi-interpolation operator
$\inter_h\upav$ that prescribes homogeneous boundary values.

\subsection{Trace operator}
Let $F\in \calFhb$ be a boundary face. We denote by $K_F$ the unique cell
such that $F\subset \partial K_F$. We consider the global trace operator $\gamma:
W^{1,1}(\Dom;\Real^q) \to L^1(\front;\Real^t)$ such that  
\begin{equation}
\gamma(v)_{|F} = \gamma_{K_F,F}(v_{|{K_F}}), \qquad \forall F\in \calFhb.
\end{equation}
We define for all $p\in[1,\infty]$
the functional space
\begin{equation}
W^{1,p}_{0,\gamma}(\Dom;\Real^q):=\{v\in W^{1,p}(\Dom;\Real^q)\tq \gamma(v)=0\}.
\end{equation}
We then set
\begin{equation}
P_0(\calT_h) := 
 \{v_h\in P(\calT_h)\tq \gamma(v_h)=0\}.
\end{equation}
The typical examples we have in mind are
\begin{subequations}\begin{align}
P\upg_0(\calT_h)   &:= \{v_h\in P\upg(\calT_h)\tq v_{h|\front}=0\},\\
\bP\upc_0(\calT_h) &:= \{\bv_h\in P\upc(\calT_h)\tq \bv_h\CROSS\bn_{|\front}=\bzero\},\\
\bP\upd_0(\calT_h) &:=\{\bv_h\in P\upd(\calT_h)\tq \bv_h\SCAL\bn_{|\front}=0\}.
\end{align} \end{subequations} Upon setting $V_0\upg = \bset v\in
V\upg\st v_{|\front} =0\eset$, $\bV_0\upc = \bset \bv \in\bV\upc\st
\bv\CROSS\bn_{|\front} =\bzero\eset$, and $\bV_0\upd = \bset \bv\in
\bV\upd \st \bv\SCAL\bn_{|\front} =0\eset$, we have $P\upg_0(\calT_h)
= P\upg(\calT_h)\cap V_0\upg$, $\bP\upc_0(\calT_h) =
\bP\upc(\calT_h)\cap \bV_0\upc$, and $\bP\upd_0(\calT_h) =
\bP\upd(\calT_h)\cap \bV_0\upd$, \ie the finite element spaces
$P\upg_0(\calT_h), \bP\upc_0(\calT_h), \bP\upd_0(\calT_h)$ are conforming in
$V_0\upg$, $\bV_0\upc$, $\bV_0\upd$, respectively.

In the rest of the paper, we slightly abuse the terminology by calling
global degrees of freedom the elements of $\calA_h$.  We say that a
global degree of freedom $a\in \calA_h$ is an internal degree of
freedom if $\gamma(\varphi_a)=0$.  The collection of all the internal
degrees of freedom is denoted $\calAhi$; the degrees of freedom in
$\calAhb =\calA_h{\setminus}\calAhi$ are called boundary degrees of
freedom.
Let $a\in\calAhb$, then there is a face $F\in\calFhb$ such that
$\gamma(\varphi_{a})_{|F}\ne 0$. Let $K_F$ be the unique cell such
that $F\subset \partial K_F$, then
$\supp(\varphi_a)\cap K_F\ne \emptyset$. This means that there is a
unique $i_{F}\in\calN$ such that
$\varphi_{a|K_F}=\theta_{K_F,i}$.  
For
all $a\in \calAhb$, we define $\calF_a^\partial$ to be the collection
of all the boundary faces $F\in\calFhb$ such that there is
$(K_F,i_F)\in \calC_a$ and $F\subset \partial K_F$; we set
$\calF_a^\partial=\emptyset$ if $a\in \calAhi$.  We abuse the notation
by setting
\begin{equation}
\jump{v}_F^\gamma(\bx) = \gamma_{K_F,F}(v_{|K_F})(\bx),\quad\text{and}
\quad \jump{v}_F(\bx) = v_{|F}(\bx),\quad \text{\ae}\ \bx\in F,\ \forall F\in \calFhb,
\end{equation}
and assume that $|\jump{v}_F^\gamma(\bx)| \le |\jump{v}_F(\bx)| $, \ae
$\bx\in F$, for all $F\in \calFhb$.  In coherence with the assumption
\eqref{jump_sigma_by_jump_trace}, we assume that there is a uniform
constant $c$ such that the following holds for all the boundary
degrees of freedom $a\in\calAhb$, all $F\in \calF_a^\partial$, all
$i_F\in\calN$ such that $(K_F,i_F)\in \calC_a$, and all
$v\in P\upb(\calT_h)$:
\begin{equation}
 |\sigma_{K_F,i_F}(v)| \le c\, \|\polA_{K_F}\|_{\ell^2} \|\gamma_{K_F,F}(v_{|K_F})\|_{L^\infty(F;\Real^t)},
\label{Assumption_sigma_bounded_bnd}
\end{equation}
  Note that
this assumption is satisfied by all the finite elements 
considered in~\S\ref{Sec:Examples}.

\subsection{Averaging and quasi-interpolation operators revisited}
We are going to modify the averaging operator $\calJ_h\upav$ to
prescribe homogeneous boundary conditions.  
We define $\calJ_{h0}\upav :
P\upb(\calT_h) \rightarrow P_{0}(\calT_h)$ by setting for all $v\in P\upb(\calT_h)$,
\begin{align} 
\calJ_{h0}\upav(v) = \sum_{a\in\calAhi} \left( 
\frac{1}{\card(\calC_{a})} \sum_{(K,i)\in \calC_a}
  \sigma_{K,i}(v_{|K}) \right) \varphi_a. \label{eq:def_Jh_upd_bnd} 
\end{align} 

\begin{lemma}[$L^p$-stability] \label{lem:stab_Oswald0}
There exists a uniform constant $c$ such that
\begin{equation}
  \|\calJ_{h0}\upav(v)\|_{L^p(K;\Real^q)}  \le c\, \| v\|_{L^p(\Dom_K;\Real^q)},
\end{equation}
for all $p\in[1,\infty]$, all $v\in P\upb(\calT_h)$, and all $K\in \calT_h$.
\end{lemma}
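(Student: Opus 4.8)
The plan is to reproduce the argument of Lemma~\ref{lem:stab_Oswald} almost verbatim, the only new ingredient being that the sum defining $\calJ_{h0}\upav$ in~\eqref{eq:def_Jh_upd_bnd} runs over the \emph{internal} global degrees of freedom $\calAhi$ only. As before, I would first establish the bound for $p=\infty$ and then recover every $p\in[1,\infty]$ by invoking the local inverse inequalities available in $P\upb(\calT_h)$ together with the shape-regularity of $\famTh$, exactly as in the proof of Lemma~\ref{lem:stab_Oswald}.

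For $p=\infty$, fix $K\in\calT_h$. Restricting~\eqref{eq:def_Jh_upd_bnd} to $K$ and using that $\varphi_{a|K}=\theta_{K,i}$ whenever $a=\sfa(K,i)$, I would write
\[
\calJ_{h0}\upav(v)_{|K} = \sum_{\substack{i\in\calN\\ \sfa(K,i)\in\calAhi}} \left( \frac{1}{\card(\calC_{\sfa(K,i)})} \sum_{(K',i')\in \calC_{\sfa(K,i)}} \sigma_{K',i'}(v_{|K'}) \right) \theta_{K,i}.
\]
The key observation is that this is a sub-sum of the expression giving $\calJ_h\upav(v)_{|K}$: the index set $\{i\in\calN\st \sfa(K,i)\in\calAhi\}$ is contained in $\calN$. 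Applying the triangle inequality and the norm-equivalence bound $\|\theta_{K,i}\|_{L^\infty(K;\Real^q)}\le c\,\|\polA_K^{-1}\|_{\ell^2}$ from Lemma~\ref{Lem:Linfty_bounded_by_sigma} termwise, I obtain exactly the same upper bound as in Lemma~\ref{lem:stab_Oswald}, since dropping the boundary indices only removes nonnegative summands.

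From there the remainder of the estimate is identical: for each surviving $(K',i')\in\calC_{\sfa(K,i)}$ I would use $|\sigma_{K',i'}(v_{|K'})|\le |\sigma_{K',i'}(v_{|K'})-\sigma_{K,i}(v_{|K})| + |\sigma_{K,i}(v_{|K})|$, bound the first term via assumption~\eqref{jump_sigma_by_jump_trace} together with the control $\|\jump{v}_F^\gamma\|_{L^\infty(F;\Real^t)}\le \|\jump{v}_F\|_{L^\infty(F;\Real^t)}\le \|v\|_{L^\infty(\Dom_K;\Real^q)}$, and bound the second term via~\eqref{eq:ddl_Linfty}; finally I sum over $K'\in\calT_K$ and use that $\card(\calN)$ and $\card(\calT_K)$ are uniformly bounded. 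This yields $\|\calJ_{h0}\upav(v)\|_{L^\infty(K;\Real^q)}\le c\,\|v\|_{L^\infty(\Dom_K;\Real^q)}$.

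There is essentially no genuine obstacle here; the only point deserving care is to confirm that restricting to $\calAhi$ does not enlarge the relevant neighborhood, i.e.\ that every cell $K'$ contributing to the restricted sum still belongs to $\calT_K$, so that the right-hand side remains controlled by the norm of $v$ over $\Dom_K$ and not over a larger patch. Thus the same constant $c$ as in Lemma~\ref{lem:stab_Oswald} works, and the general-$p$ case follows by the identical inverse-inequality argument.
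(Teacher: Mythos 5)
Your proposal is correct and matches the paper exactly: the paper's own proof consists of the single instruction to proceed as in the proof of Lemma~\ref{lem:stab_Oswald}, which is precisely what you carry out, with the accurate observation that restricting the sum to $\calAhi$ only drops nonnegative terms from the upper bound after the triangle inequality. No issues.
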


\begin{proof}
Proceed as in the proof of Lemma~\ref{lem:stab_Oswald}.
\end{proof}

\begin{lemma}[Approximation by averaging] \label{lem:bnd_Osw_RT_N_bnd}
There exists a uniform constant $c$ such that the
following holds: 
\begin{align} \label{eq:bnd_Osw_RT_bnd}
  |w-\calJ_{h0}\upav(w)|_{W^{m,p}(K;\Real^q)} \le c\,
  h_K^{d\left(\tfrac1p-\tfrac1r\right) + \tfrac1r-m}
  \sum_{F\in\calFKi\cup\calFKb } \|\jump{w}_F^\gamma\|_{L^r(F;\Real^t)},
\end{align} 
for all $m\in\intset{0}{k+1}$, all $p,r\in [1,\infty]$, all $w\in
P\upb(\calT_h)$, and all $K\in\calT_h$, 
with $\calFK^\partial = \cup_{i\in\calN} \calF_{\sfa(K,i)}\upbnd$ and $\calFKi$ defined in Lemma~\ref{lem:bnd_Osw_RT_N}.  
\end{lemma}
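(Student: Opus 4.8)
The plan is to mirror the proof of Lemma~\ref{lem:bnd_Osw_RT_N}, reducing first to the case $m=0$ and $p=r=\infty$ (the general exponents following from the same local inverse inequalities in $P\upb(\calT_h)$ used there), and then to split the local indices at $K$ according to whether the associated global degree of freedom is interior or boundary. Setting $e := w-\calJ_{h0}\upav(w)\in P\upb(\calT_h)$ and invoking the norm equivalence~\eqref{eq:Linfty_ddl}, it suffices to bound $\|\polA_K^{-1}\|_{\ell^2}\sum_{i\in\calN}|\sigma_{K,i}(e_{|K})|$. Since $\calJ_{h0}\upav(w)$ is built only from the global shape functions $\varphi_a$ with $a\in\calAhi$ (see~\eqref{eq:def_Jh_upd_bnd}), the duality $\sigma_{K,i}(\theta_{K,j})=\delta_{ij}$ gives, writing $a:=\sfa(K,i)$,
\[
\sigma_{K,i}(e_{|K}) =
\begin{cases}
\frac{1}{\card(\calC_a)}\sum_{(K',i')\in\calC_a}\big(\sigma_{K,i}(w_{|K})-\sigma_{K',i'}(w_{|K'})\big), & a\in\calAhi,\\[4pt]
\sigma_{K,i}(w_{|K}), & a\in\calAhb.
\end{cases}
\]

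For the interior indices ($a\in\calAhi$), the expression is exactly the one treated in Lemma~\ref{lem:bnd_Osw_RT_N}: using the path of mesh cells in $\calT_K$ joining $K$ to $K'$ together with assumption~\eqref{jump_sigma_by_jump_trace}, I would bound $|\sigma_{K,i}(e_{|K})|$ by $c\,\|\polA_K\|_{\ell^2}\sum_{F\in\calF_a\upint}\|\jump{w}_F^\gamma\|_{L^\infty(F;\Real^t)}$, which contributes the interior faces collected in $\calFKi$. For the boundary indices ($a\in\calAhb$), the corresponding term is absent from~\eqref{eq:def_Jh_upd_bnd}, so $\sigma_{K,i}(e_{|K})=\sigma_{K,i}(w_{|K})$ with no cancellation, and I would apply the new assumption~\eqref{Assumption_sigma_bounded_bnd} (with $K=K_F$) to get $|\sigma_{K,i}(w_{|K})|\le c\,\|\polA_K\|_{\ell^2}\|\gamma_{K,F}(w_{|K})\|_{L^\infty(F;\Real^t)}$ for the adjacent boundary face $F\in\calF_a\upbnd\subset\calFKb$; under the stated abuse of notation $\jump{w}_F^\gamma=\gamma_{K,F}(w_{|K})$ on boundary faces, this yields exactly the boundary-face terms of the right-hand side. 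Collecting the two contributions, multiplying by $\|\polA_K^{-1}\|_{\ell^2}$ and using $\|\polA_K^{-1}\|_{\ell^2}\|\polA_K\|_{\ell^2}\le c\,\|\Jac_K\|_{\ell^2}\|\Jac_K^{-1}\|_{\ell^2}$ from~\eqref{Assumption:AkAkminusone_bounded} together with the shape-regularity bounds~\eqref{Eq2:propJK} gives the $L^\infty$ estimate (whose $h_K$-exponent is $0$ when $m=0$, $p=r=\infty$, as required). The general case $m\in\intset{0}{k+1}$, $p,r\in[1,\infty]$ then follows by the inverse-inequality scaling already used in Lemma~\ref{lem:bnd_Osw_RT_N}, since $\card(\calN)$ is uniformly bounded and $\calFKi\cup\calFKb=\cup_{i\in\calN}(\calF_{\sfa(K,i)}\upint\cup\calF_{\sfa(K,i)}\upbnd)$.

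The only genuinely new ingredient beyond Lemma~\ref{lem:bnd_Osw_RT_N} is the handling of boundary degrees of freedom, and the single point that must be checked carefully is that dropping the boundary terms in~\eqref{eq:def_Jh_upd_bnd} leaves $\sigma_{K,i}(e_{|K})=\sigma_{K,i}(w_{|K})$ whenever $\sfa(K,i)\in\calAhb$, so that~\eqref{Assumption_sigma_bounded_bnd} applies directly and produces a genuine $\gamma$-trace term rather than a $\gamma$-jump. I do not anticipate any serious obstacle here, since~\eqref{Assumption_sigma_bounded_bnd} was tailored precisely to control the boundary degrees of freedom by the $\gamma$-trace on the adjacent boundary face.
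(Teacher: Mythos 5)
Your proposal is correct and follows essentially the same route as the paper: the paper's proof likewise reduces to the argument of Lemma~\ref{lem:bnd_Osw_RT_N}, observing that $\sigma_{K,i}(e_{|K})$ is still given by~\eqref{eq:sigma_e_Osw} when $\sfa(K,i)\in\calAhi$, while $\sigma_{K,i}(e_{|K})=\sigma_{K,i}(w_{|K})$ when $\sfa(K,i)\in\calAhb$, the latter term being bounded via~\eqref{Assumption_sigma_bounded_bnd}. Your write-up simply makes explicit the details that the paper leaves to the reader.
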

\begin{proof}
  This is a straightforward adaptation of the proof of
  Lemma~\ref{lem:bnd_Osw_RT_N}. Letting $e=v-\calJ_{h0}\upav(v)$,
  we observe that $\sigma_{K,i}(e_{|K})$ is still given by~\eqref{eq:sigma_e_Osw}
if $\sfa(K,i)\in\calAhi$, while $\sigma_{K,i}(e_{|K})=\sigma_{K,i}(w)$ if
$\sfa(K,i)\in\calAhb$, and this term is bounded using~\eqref{Assumption_sigma_bounded_bnd}.
\end{proof}

A global quasi-interpolation operator
$\inter_{h0}\upav : L^1(\Dom;\Real^q) \rightarrow P_0(\calT_h)$ is then defined
by setting
\begin{equation}
  \inter_{h0}\upav = \calJ_{h0}\upav\circ\inter_h^\sharp.
\end{equation} 
Note that $P_0(\calT_h)$ is point-wise invariant under $\inter_{h0}\upav$ since
\eqref{eq:bnd_Osw_RT_bnd} implies that $P_0(\calT_h)$ is point-wise invariant
under $\calJ_{h0}\upav$. Hence, $\inter_{h0}\upav$ is a projection,
\ie $(\inter_{h0}\upav)^2=\inter_{h0}\upav$. 
\begin{lemma}[$L^p$-stability of $\inter_{h0}\upav$] \label{Lem:Lp_stability_inter_upav0}
There is a uniform constant $c$ such that  
\begin{equation}
  \|\inter_{h0}\upav (v)\|_{L^p(K;\Real^q)}  \le c\, \| v\|_{L^p(\Dom_K;\Real^q)},
\end{equation}
for all $p\in[1,\infty]$, all $v\in L^p(\Dom;\Real^q)$, and all $K\in \calT_h$.
\end{lemma}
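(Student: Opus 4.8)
The plan is to exploit the factorization $\inter_{h0}\upav = \calJ_{h0}\upav\circ\inter_h^\sharp$ and to chain the two $L^p$-stability estimates already at our disposal for the two factors. First I would fix $K\in\calT_h$ and $v\in L^p(\Dom;\Real^q)$, and set $w := \inter_h^\sharp(v)$, so that $w\in P\upb(\calT_h)$ and $\inter_{h0}\upav(v) = \calJ_{h0}\upav(w)$. Applying Lemma~\ref{lem:stab_Oswald0} on the cell $K$ then gives $\|\inter_{h0}\upav(v)\|_{L^p(K;\Real^q)} \le c\,\|w\|_{L^p(\Dom_K;\Real^q)}$, which reduces the problem to controlling $\|\inter_h^\sharp(v)\|_{L^p(\Dom_K;\Real^q)}$ by $\|v\|_{L^p(\Dom_K;\Real^q)}$.

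For the second step I would invoke the cell-wise $L^p$-stability of $\inter_h^\sharp$ from Proposition~\ref{Prop:Intersharp_bounded_commmutes_invariant}(i), namely $\|\inter_{K'}^\sharp(v_{|K'})\|_{L^p(K';\Real^q)} \le c\,\|v\|_{L^p(K';\Real^q)}$, uniformly in $K'\in\calT_h$. Since $w_{|K'} = \inter_{K'}^\sharp(v_{|K'})$ on each $K'\in\calT_K$ and $\Dom_K = \interior\{\cup_{K'\in\calT_K}K'\}$, summing the $p$-th powers over $K'\in\calT_K$ (for $p<\infty$; for $p=\infty$ one takes the maximum instead) yields $\|w\|_{L^p(\Dom_K;\Real^q)} \le c\,\|v\|_{L^p(\Dom_K;\Real^q)}$. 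Combining the two estimates gives the claim.

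The argument is essentially routine, and the only point requiring any care is the uniformity of the final constant: the passage from the cell-wise bound to the patch bound introduces a factor $\card(\calT_K)^{\frac1p}$ in the worst case, which is absorbed into $c$ because $\card(\calT_K)$ is uniformly bounded by virtue of the shape-regularity of the mesh sequence $\famTh$. No low-regularity argument or commutation property is needed here: unlike the proof of Lemma~\ref{Lem:Lp_stability_inter_upav} for $m\ge1$, this $m=0$ estimate follows purely from composing the two $L^p$-stable maps, exactly as in the $m=0$ case treated there.
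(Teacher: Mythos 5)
Your proposal is correct and follows exactly the route the paper takes: the paper's proof is "proceed as for Lemma~\ref{Lem:Lp_stability_inter_upav}," whose $m=0$ case is precisely the composition of the cell-wise $L^p$-stability of $\inter_h^\sharp$ from Proposition~\ref{Prop:Intersharp_bounded_commmutes_invariant} with the patch-wise $L^p$-stability of the averaging operator (here Lemma~\ref{lem:stab_Oswald0}). Your additional remarks on summing over the patch and absorbing $\card(\calT_K)^{1/p}$ via shape-regularity are the details the paper leaves implicit.
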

\begin{proof}
Proceed as for the proof of Lemma~\ref{Lem:Lp_stability_inter_upav}.
\end{proof}
%

\subsection{Error estimates}
The purpose of this section is to establish error estimates for the
quasi-interpolation operator $\inter_{h0}\upav$ in the 
${W^{r,p}}$-norm (either integer or fractional). Let $r\in [0,k+1]$ and $p\in [1,\infty)$ if $r\not\in\polN$ or $p\in [1,\infty]$ if $r\in\polN$. 
If $r>\frac1p$, then functions
in $W^{r,p}(\Dom;\Real^q)$ have traces on $\front$, and therefore it
makes sense to define
\begin{equation}
W^{r,p}_{0,\gamma}(\Dom;\Real^q):=\{v\in W^{r,p}(\Dom;\Real^q)\st\gamma(v)=0\}.
\end{equation}  
We are going to use the following notation
\begin{subequations}
\begin{align}
\calT_h\upint  &:=\{K\in\calT_h\tq \forall i\in\calN,\, \sfa(K,i)\in\calAhi\},\\
\calT_h\upbnd&:=\calT_h\setminus\calT_h\upint=\{K\in\calT_h\tq \exists i\in\calN,\, \sfa(K,i)\in\calAhb\},\\
\Dom\upbnd  &:= \interior(\cup_{K\in\calT_h\upbnd}K).
\end{align}
\end{subequations}
$\calT_h\upint$ is the set of the cells whose degrees of freedom are
all internal.  $\calT_h\upbnd$ is the set of the cells that
have at least one boundary degree of freedom.  $\Dom\upbnd$ is the
collection of the points in $\Dom$ that belong to at least one cell in
$\calT_h\upbnd$.
\begin{theorem}[Approximation]
  \label{Th:interp_fractional_s_LT_one} 
  There exists a uniform constant $c$ such that the following estimate
  holds for all $r\in [0,k+1]$, all
$m\in\intset{0}{\lfloor r\rfloor}$, all $p\in [1,\infty)$ if $r\not\in\polN$ or all $p\in [1,\infty]$ if $r\in\polN$, all $v \in W^{r,p}(\Dom_K;\Real^q)$,
and all $K\in \calT_h\upint$:
\begin{equation}
|v - \inter_{h0}\upav(v)|_{W^{m,p}(K;\Real^q)} \le c \, h_K^{r-m} |v|_{W^{r,p}(D_K;\Real^q)}.  \label{eq:Wrp_0_a}
\end{equation}
Moreover, 
\eqref{eq:Wrp_0_a} also holds if $rp>1$ with $c$ depending on $|rp-1|$
for all $v \in W^{r,p}_{0,\gamma}(\Dom;\Real^q)$ and all $K\in \calT_h\upbnd$.
Finally, if $rp<1$ (\ie $r\in(0,1)$ and $m=0$), we have
\begin{equation}
\|v - \inter_{h0}\upav(v)\|_{L^p(\Dom\upbnd;\Real^q)} \le c \, h^r \|v\|_{W^{r,p}(\Dom;\Real^q)}, \quad\forall v \in W^{r,p}(\Dom;\Real^q).
\end{equation}%
\end{theorem}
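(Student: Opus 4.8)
The plan is to treat the three regimes separately, in each case reducing to results already established for the averaging operator $\calJ_{h0}\upav$, for the broken interpolant $\inter_h^\sharp$, and — in the low-regularity regime — to a fractional Hardy inequality. The guiding observation is that $\inter_{h0}\upav$ reproduces polynomials only far from $\front$, so the polynomial-subtraction argument of Theorem~\ref{Thm:fractional_estimate_in_Lp} survives only for interior cells; on boundary cells one must instead exploit either the vanishing trace of $v$ (when $rp>1$) or the smallness of the boundary strip (when $rp<1$).

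\emph{Interior cells.} Let $K\in\calT_h\upint$. First I note that the boundary modification is invisible on such a cell: if $a\in\calAhb$ then $\varphi_{a|K}=0$, for otherwise $a=\sfa(K,i)$ for some $i\in\calN$, forcing $a\in\calAhi$. Comparing~\eqref{def_Jh_upd} with~\eqref{eq:def_Jh_upd_bnd} this yields $\calJ_{h0}\upav(w)_{|K}=\calJ_h\upav(w)_{|K}$ for all $w\in P\upb(\calT_h)$, hence $\inter_{h0}\upav(v)_{|K}=\inter_h\upav(v)_{|K}$. Thus~\eqref{eq:Wrp_0_a} is exactly the local estimate of Theorem~\ref{Thm:fractional_estimate_in_Lp} restricted to $K$, and no condition on $v$ is needed. (Equivalently, $\calFKb=\emptyset$ for $K\in\calT_h\upint$, so Lemma~\ref{lem:bnd_Osw_RT_N_bnd} collapses to Lemma~\ref{lem:bnd_Osw_RT_N}.)

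\emph{Boundary cells with $rp>1$.} Now let $K\in\calT_h\upbnd$ and $v\in W^{r,p}_{0,\gamma}(\Dom;\Real^q)$. Since polynomials are no longer reproduced on $K$, I split, on $K$,
\[
v-\inter_{h0}\upav(v)=\bigl(v-\inter_K^\sharp(v)\bigr)+\bigl(\inter_K^\sharp(v)-\calJ_{h0}\upav(\inter_h^\sharp(v))\bigr).
\]
The first term is bounded by $c\,h_K^{r-m}|v|_{W^{r,p}(K;\Real^q)}$ via Remark~\ref{Rem:fractiona_norm_inter_sharp}. For the second term I apply Lemma~\ref{lem:bnd_Osw_RT_N_bnd} with both exponents equal to $p$, producing the prefactor $h_K^{\frac1p-m}$ times $\sum_{F\in\calFKi\cup\calFKb}\|\jump{\inter_h^\sharp(v)}_F^\gamma\|_{L^p(F;\Real^t)}$. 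The key step is to rewrite every $\gamma$-jump in terms of $\inter_h^\sharp(v)-v$: on interior faces $v$ is single-valued so $\jump{v}_F^\gamma=0$ by~\eqref{eq:gamma_trace_0}, while on boundary faces $\gamma_{K_F,F}(v_{|K_F})=0$ because $v\in W^{r,p}_{0,\gamma}$; in both cases $\jump{\inter_h^\sharp(v)}_F^\gamma=\jump{\inter_h^\sharp(v)-v}_F^\gamma$, which is legitimate precisely because $rp>1$ ensures $v$ has a trace. Bounding the $\gamma$-jump by the one-sided values and invoking the trace inequality of Lemma~\ref{Lem:trace_inequality_in_Wsp} together with Remark~\ref{Rem:fractiona_norm_inter_sharp} on each adjacent cell gives $\|\jump{\inter_h^\sharp(v)-v}_F^\gamma\|_{L^p(F;\Real^t)}\le c\,h_K^{r-\frac1p}|v|_{W^{r,p}(\Dom_K;\Real^q)}$ on each of the uniformly boundedly many faces. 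Multiplying by $h_K^{\frac1p-m}$ and summing returns $c\,h_K^{r-m}|v|_{W^{r,p}(\Dom_K;\Real^q)}$, proving~\eqref{eq:Wrp_0_a}; the dependence of $c$ on $|rp-1|$ is inherited from the trace inequality, which degenerates as $rp\to1$.

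\emph{Boundary strip with $rp<1$.} Finally let $r\in(0,1)$, $m=0$, $rp<1$ (so $p<\infty$), in which case no trace of $v$ exists and no boundary condition is imposed. Here I argue globally. By the triangle inequality and the $L^p$-stability of $\inter_{h0}\upav$ (Lemma~\ref{Lem:Lp_stability_inter_upav0}), summed over $K\in\calT_h\upbnd$ and using the finite overlap of the patches $\Dom_K$,
\[
\|v-\inter_{h0}\upav(v)\|_{L^p(\Dom\upbnd;\Real^q)}\le \|v\|_{L^p(\Dom\upbnd;\Real^q)}+c\,\|v\|_{L^p(S;\Real^q)},\qquad S:=\interior\bigl(\cup_{K\in\calT_h\upbnd}\Dom_K\bigr).
\]
By shape-regularity each patch spans a uniformly bounded number of mesh layers, so both $\Dom\upbnd$ and $S$ lie within distance $O(h)$ of $\front$. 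The whole statement thus reduces to the boundary-strip bound $\|v\|_{L^p(S;\Real^q)}\le c\,h^{r}\|v\|_{W^{r,p}(\Dom;\Real^q)}$. I expect this to be the main obstacle, and it is exactly where $rp<1$ enters: it follows from the fractional Hardy inequality $\int_\Dom \mathrm{dist}(\bx,\front)^{-rp}\,\|v(\bx)\|_{\ell^2(\Real^q)}^p\,\dif x\le c\,\|v\|_{W^{r,p}(\Dom;\Real^q)}^p$, valid for $rp<1$, since $\mathrm{dist}(\SCAL,\front)\le c\,h$ on $S$ then gives $\|v\|_{L^p(S;\Real^q)}^p\le c\,h^{rp}\|v\|_{W^{r,p}(\Dom;\Real^q)}^p$. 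The failure of this inequality for $rp\ge1$, where the trace becomes the operative obstruction, is precisely what forces the dichotomy between the two boundary regimes and explains why no boundary condition is required when $rp<1$. Collecting the three cases proves the theorem.
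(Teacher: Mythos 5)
Your proof is correct and follows essentially the same route as the paper's: interior cells reduce to Theorem~\ref{Thm:fractional_estimate_in_Lp} because the two averaging operators coincide there; for $rp>1$ the vanishing $\gamma$-trace lets you replace $\inter_h^\sharp(v)$ by $\inter_h^\sharp(v)-v$ in the face terms and conclude via Lemma~\ref{Lem:trace_inequality_in_Wsp} and the local approximation of $\inter_K^\sharp$; and for $rp<1$ both arguments rest on the same Hardy-type inequality $\|\rho^{-r}v\|_{L^p(\Dom)}\le c\|v\|_{W^{r,p}(\Dom)}$ in the $O(h)$ boundary strip. The only (harmless) differences are organizational: you invoke Lemma~\ref{lem:bnd_Osw_RT_N_bnd} directly on $w=\inter_h^\sharp(v)$ instead of comparing $\inter_{h0}\upav$ with $\inter_h\upav$ and redoing the path argument, and in the $rp<1$ case you use the $L^p$-stability of $\inter_{h0}\upav$ rather than passing through $\inter_h^\sharp$.
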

\begin{proof} Let $K$ be a cell in $\calT_h$.
If $K\in\calT_h\upint$, then 
$\inter_{h0}\upav(v)_{|K} = \inter_{h}\upav(v)_{|K}$; 
this proves~\eqref{eq:Wrp_0_a} in this case.
Let us now consider $K\in\calT_h\upbnd$.
The triangle inequality implies that
\[
|\inter_{h0}\upav(v)-v|_{W^{m,p}(K;\Real^q)} \le
|\inter_{h}\upav(v)-v|_{W^{m,p}(K;\Real^q)} +
|\inter_{h0}\upav(v)-\inter_{h}\upav(v)|_{W^{m,p}(K;\Real^q)}.
\]
Since we have already established that
$|\inter_{h}\upav(v)-v|_{W^{m,p}(K;\Real^q)} \le c h_K^{r-m}
|v|_{W^{r,p}(D_K;\Real^q)}$ in
Theorem~\ref{Thm:fractional_estimate_in_Lp}, we just need to estimate
$|\inter_{h0}\upav(v)-\inter_{h}\upav(v)|_{W^{m,p}(K;\Real^q)}$.
Let us define the set of the boundary degrees of freedom with nonempty
support on $K$, $\calA_K^\partial:=\{a\in\calAhb \st \exists i\in\calN,
\sfa(K,i)\in\calC_a\}$. Then 
\[
(\inter_{h0}\upav(v)-\inter_{h}\upav(v))_{|K} = -\sum_{a\in\calA_K^\partial}
\left(\frac{1}{\card(\calC_a)} \sum_{(K',i')\in\calC_a} \sigma_{(K',i')}(\inter_{K'}^\sharp(v|_{K'}))\right)\theta_{K,i}.
\]
For any $a\in\calA_K^\partial$ and any $(K',i')\in \calC_a$, there is a face $F\in\calF_a^\partial$ 
and a pair $(K_F,i_F)\in \calC_a$ such that there is a path of mesh cells in
$\calT_K$ linking $K'$ to $K_F$ so that any two consecutive mesh cells
in the path share a common face in $\calF_{a}\upint$, and
each face is crossed only once. This observation implies that
\begin{align*}
|\sigma_{(K',i')}(g)| & \le \sum_{F\in\calF_{a}\upint}
|\sigma_{\sfa(K_{l},i_l)}(g_{|K_{l}}) -\sigma_{\sfa(K_{r},i_r)}(g_{|K_{r}})|  + \sum_{F\in \calF_a\upbnd} |\sigma_{\sfa(K_F,i_F)}(g_{|K_F})|
\end{align*}
for all $g\in P\upb(\calT_h)$. By proceeding as in the proof of
Lemma~\ref{lem:bnd_Osw_RT_N}, we infer that
\begin{multline*}
|\inter_{h0}\upav(v)-\inter_{h}\upav(v)|_{W^{m,p}(K;\Real^q)} \le c h_K^{-m+\frac1p}
  \sum_{a\in\calA_K^\partial} 
\Bigg[\sum_{F\in\calF_a\upint}
  \|\jump{\inter^\sharp(v)}_F^\gamma\|_{L^p(F;\Real^t)} \\
+ \sum_{F\in\calF_a^\partial}
 \|\gamma(\inter^\sharp(v))_{|F}\|_{L^p(F;\Real^t)} \Bigg].
\end{multline*}

\underline{Case 1, $rp>1$:} Let us assume that $v\in
W^{r,p}_{0,\gamma}(\Dom;\Real^q)$. The boundary condition
$\gamma(v)_{F}=0$, for all $F\in\calFhb$, and the continuity condition
  $\jump{v}_F^\gamma=0$, for all $F\in\calF_h\upint$, imply that
\begin{align*}
  |\inter_{h0}\upav(v)-\inter_{h}\upav(v)|_{W^{m,p}(K;\Real^q)}
  &\le c\, h_K^{-m+\tfrac1p} \sum_{F\in\calFK\upint\cup\calFK\upbnd} 
\|\inter_K^\sharp(v)
  -v\|_{L^p(F;\Real^t)}.
\end{align*}
The conclusion follows by invoking the trace inequality from
Lemma~\ref{Lem:trace_inequality_in_Wsp}, either with $s=1$ if
$r\in\Natural$ or with $s=r-\lfloor r\rfloor$ otherwise, and
the approximation properties of $\inter_K^\sharp$ stated either in
  Theorem~\ref{Th:interLoc_sharp} or in
Remark~\ref{Rem:fractiona_norm_inter_sharp}.

\underline{Case 2, $rp<1$:} 
Assume now that $rp<1$. Norm equivalence implies that
\begin{align*}
\|\inter_{h0}\upav(v)-\inter_{h}\upav(v)\|_{L^p(\Dom\upbnd;\Real^q)}^p
\le c\, \|\inter_h^\sharp(v))\|_{L^p(\Dom\upbnd;\Real^q)}^p.
\end{align*}
Let $\rho$ be the distance to $\front$;
then there is $c$ uniform with respect to the mesh sequence such that
$\|\rho\|_{L^\infty(\Dom\upbnd)}\le c h$ and 
\begin{align*}
\|\inter_{h0}\upav(v)-\inter_{h}\upav(v)\|_{L^p(\Dom\upbnd;\Real^q)} 
& \le c \big( \|\inter_h^\sharp(v) -v\|_{L^p(\Dom\upbnd;\Real^q)} 
+ \|v\|_{L^p(\Dom\upbnd;\Real^q)} \big)\\
&\le c \left(h^r \|v\|_{W^{r,p}(\Dom;\Real^q)} + \|\rho^r \rho^{-r}v\|_{L^p(\Dom\upbnd;\Real^q)} \right)\\
&\le c \left(h^r \|v\|_{W^{r,p}(\Dom;\Real^q)} + \|\rho\|_{L^\infty(\Dom\upbnd)}^r \|\rho^{-r}v\|_{L^p(\Dom\upbnd;\Real^q)} \right).
\end{align*}
 Since $rp<1$, we infer that  (see
\cite[Cor.~1.4.4.5]{Grisvard_85})
\[
\|\rho^{-r}v\|_{L^p(\Dom\upbnd;\Real^q)}\le\|\rho^{-r}v\|_{L^p(\Dom;\Real^q)} \le c \|v\|_{W^{r,p}(\Dom;\Real^q)}.
\]
In conclusion,
$\|\inter_{h0}\upav(v)-\inter_{h}\upav(v)\|_{L^p(\Dom\upbnd;\Real^q)} \le c
h^r \|v\|_{W^{r,p}(\Dom;\Real^q)}$.
\end{proof}

\begin{corollary}[Global best approximation in $L^p$]
There exists a uniform constant $c$, additionally depending on $|rp-1|$, such that
\begin{equation}
\!\,\inf_{w_h \in P_0(\calT_h)}\!\|v-w_h\|_{L^p(\Dom;\Real^q)} \! \le 
\begin{cases}
c h^r |v|_{W^{r,p}(\Dom;\Real^q)},& \text{$\forall v\in W^{r,p}_{0,\gamma}(\Dom;\Real^q)$ if $rp>1$}\\
c h^r \|v\|_{W^{r,p}(\Dom;\Real^q)},& \text{$\forall v\in W^{r,p}(\Dom;\Real^q)$ if $rp<1$}.
\end{cases}\hspace{-0.5cm}
\end{equation}
\end{corollary}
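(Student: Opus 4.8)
The plan is to bound the best-approximation error from above by the error of the specific approximant $\inter_{h0}\upav(v)$, which lies in $P_0(\calT_h)$, and then to assemble the cellwise estimates of Theorem~\ref{Th:interp_fractional_s_LT_one} taken with $m=0$. Since $W^{r,p}(\Dom;\Real^q)\hookrightarrow L^1(\Dom;\Real^q)$ on the bounded domain $\Dom$, the operator $\inter_{h0}\upav$ is well-defined on $v$ and $\inter_{h0}\upav(v)\in P_0(\calT_h)$, so that
\[
\inf_{w_h\in P_0(\calT_h)}\|v-w_h\|_{L^p(\Dom;\Real^q)}\le\|v-\inter_{h0}\upav(v)\|_{L^p(\Dom;\Real^q)}.
\]
It then suffices to estimate the right-hand side by summing (or maximizing, when $p=\infty$) the local bounds over $K\in\calT_h$, the decisive point being a finite-overlap argument controlling the sum of the local (possibly fractional) seminorms on the patches $\Dom_K$ by the global one. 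Throughout I set $h=\max_{K\in\calT_h}h_K$.

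For the case $rp>1$, let $v\in W^{r,p}_{0,\gamma}(\Dom;\Real^q)$. Theorem~\ref{Th:interp_fractional_s_LT_one} then yields $\|v-\inter_{h0}\upav(v)\|_{L^p(K;\Real^q)}\le c\,h_K^{r}|v|_{W^{r,p}(\Dom_K;\Real^q)}$ for \emph{every} $K\in\calT_h$: the interior cells are covered by~\eqref{eq:Wrp_0_a}, and, since $rp>1$, so are the boundary cells in $\calT_h\upbnd$. For $p<\infty$, raising to the power $p$, summing over $K\in\calT_h$, and using $h_K\le h$ gives
\[
\|v-\inter_{h0}\upav(v)\|_{L^p(\Dom;\Real^q)}^p\le c\,h^{rp}\sum_{K\in\calT_h}|v|_{W^{r,p}(\Dom_K;\Real^q)}^p\le c\,h^{rp}|v|_{W^{r,p}(\Dom;\Real^q)}^p,
\]
where the last inequality is the finite-overlap bound discussed below; for $p=\infty$ the summation is replaced by a maximum and the seminorm on each $\Dom_K$ is trivially dominated by the global one. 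This establishes the announced estimate with the seminorm $|v|_{W^{r,p}(\Dom;\Real^q)}$.

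For the case $rp<1$ (hence $r\in(0,1)$ and $m=0$) I would split $\Dom$ into its interior and boundary parts. On the interior cells $\calT_h\upint$ the first assertion of Theorem~\ref{Th:interp_fractional_s_LT_one} again provides the local seminorm bound, whence $\sum_{K\in\calT_h\upint}\|v-\inter_{h0}\upav(v)\|_{L^p(K;\Real^q)}^p\le c\,h^{rp}|v|_{W^{r,p}(\Dom;\Real^q)}^p$ by finite overlap. On the boundary region $\Dom\upbnd$ the last assertion of the same theorem directly gives $\|v-\inter_{h0}\upav(v)\|_{L^p(\Dom\upbnd;\Real^q)}\le c\,h^{r}\|v\|_{W^{r,p}(\Dom;\Real^q)}$. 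Adding the two contributions and using $|v|_{W^{r,p}(\Dom;\Real^q)}\le\|v\|_{W^{r,p}(\Dom;\Real^q)}$ produces the full-norm estimate claimed in this regime.

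The main obstacle is the finite-overlap inequality $\sum_{K\in\calT_h}|v|_{W^{r,p}(\Dom_K;\Real^q)}^p\le c\,|v|_{W^{r,p}(\Dom;\Real^q)}^p$ for fractional $r$. For integer $r$ it is immediate: each point of $\Dom$ belongs to at most a uniformly bounded number of patches $\Dom_K$ (a consequence of the shape-regularity of $\famTh$ and of the uniform bound on $\card(\calT_K)$), so the additive seminorms overlap boundedly. For fractional $r$ the seminorm is a double integral over $\Dom_K\times\Dom_K$, and the key observation is that any pair $(\bx,\by)$ lies in $\Dom_K\times\Dom_K$ for at most that same bounded number of cells $K$, since both $\bx$ and $\by$ must belong to $\Dom_K$; hence the sum of the local double integrals remains controlled by the global one up to a uniform constant. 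This is the only step that requires genuine care beyond a direct appeal to Theorem~\ref{Th:interp_fractional_s_LT_one}, and it is precisely where the geometric regularity of the mesh and the boundedness of the connectivity sets enter.
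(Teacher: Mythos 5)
Your proposal is correct and follows the route the paper intends: the corollary is stated without proof because it follows from Theorem~\ref{Th:interp_fractional_s_LT_one} with $m=0$ exactly as you describe, namely bounding the infimum by the error of $\inter_{h0}\upav(v)$, summing the cellwise estimates (over all cells if $rp>1$, over $\calT_h\upint$ plus the region $\Dom\upbnd$ if $rp<1$), and invoking the bounded overlap of the patches $\Dom_K$. Your explicit treatment of the finite-overlap argument for the fractional Sobolev--Slobodeckij seminorm (each pair $(\bx,\by)$ lies in at most a uniformly bounded number of sets $\Dom_K\times\Dom_K$) is the right justification for the step the paper leaves implicit.
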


\begin{remark}[Theorem~\ref{Th:interp_fractional_s_LT_one}]
For $rp>1$, a similar estimate has been obtained in the scalar-valued case for 
the Scott--Zhang interpolation operator by~\cite{Ciarlet:13}. 
Furthermore, the estimate for $rp<1$ in Theorem~\ref{Th:interp_fractional_s_LT_one} essentially says
that the difference $v - \inter_{h0}\upav(v)$ does not blow up
too fast close to the boundary. A
better result is not expected since $\inter_{h0}\upav(v)$ is forced to
be zero at $\front$ whereas $v$ can blow up like $\rho^{-s} w$ 
where $w$ is a function in $L^p(\Dom;\Real^q)$.
\end{remark}

\begin{remark}[$rp=1$]
Let $r\in (0,1)$. Using the notation from  the real interpolation theory,
it is known that $W^{r,p}(\Dom) = [L^p(\Dom),W^{1,p}(\Dom)]_{r,p}$ since $\Dom$ is Lipschitz,
see \cite[Lem.~36.1]{Tartar:07}. Let us define
\begin{equation}
W^{r,p}_{00,\gamma}(\Dom;\Real^q) := [L^p(\Dom;\Real^q),W^{1,p}_{0,\gamma}(\Dom;\Real^q) ]_{r,p}.
\end{equation}
Then, using Theorem~\ref{Th:interp_fractional_s_LT_one} with
$l\in\{0,1\}$ and $m=0$, the real interpolation theory implies
that
\begin{equation} 
\|v-\inter_{h0}\upav(v)\|_{L^p(\Dom;\R^q)}\le 
c\, h^r\|v\|_{W^{r,p}_{00,\gamma}(\Dom;\Real^q)},\label{Error_Interh0_Interpolation}
\end{equation}
for all $p\in [1,\infty)$ and all
$v\in W^{r,p}_{00,\gamma}(\Dom;\Real^q)$.  This estimate is not fully
satisfactory for two reasons.  First it is not local. Second it is not
really clear what $W^{\frac1p,p}_{00,\gamma}(\Dom;\Real^q)$ is.  For
instance, let us define
\begin{subequations}\begin{align}
W^{1,p}_0(\Dom)&:=\{v\in W^{1,p}(\Dom)\st v_{|\front}=0\},\\
\bW^{1,p}_T(\Dom)&:=\{\bv\in \bW^{1,p}(\Dom)\st\bv\CROSS\bn_{|\front}=\bzero\},\\
\bW^{1,p}_N(\Dom)&:=\{\bv\in \bW^{1,p}(\Dom)\st\bv\SCAL\bn_{|\front}=0\}.
\end{align}\end{subequations}
One then realizes that characterizing
$[\bL^p(\Dom),\bW^{1,p}_T(\Dom)]_{\frac1p,p}$ and
$[\bL^p(\Dom),\bW^{1,p}_N(\Dom)]_{\frac1p,p}$ in terms of Sobolev regularity
is (possible but) not straightforward, and to the best of our
knowledge, a full characterization of
these spaces is not yet available.
\end{remark}

\section{Technical results in fractional Sobolev spaces}
\label{sec:technical}

This section contains two technical results in fractional Sobolev spaces:
a Poincar\'e inequality and a trace inequality.

\begin{lemma}[Poincar\'e inequality] \label{Lem:fracional_approx_in_LP}
Let $O$ be an open set in $\Real^d$ and let $\underline{v}_O$ be the average of $v$ over
$O$, for any $v\in L^1(O)$. Let $h_O:=\diam(O)$. Then, for all $v\in
W^{s,p}(O)$ with $s\in(0,1)$ and $p\in [1,\infty)$, the following holds:
\begin{equation}
\|v -\underline{v}_{O}\|_{L^p(O)} \le h_O^{s}
\left(\frac{h_O^d}{\mes{O}}\right)^{\frac{1}{p}} |v|_{W^{s,p}(O)}. \label{Wsp_Poincare}
\end{equation}
\end{lemma}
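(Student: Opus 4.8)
The plan is to use the elementary identity expressing the deviation from the mean as an average of pairwise differences, combined with Jensen's inequality and a crude pointwise bound on the Gagliardo kernel. Since $s\in(0,1)$ we have $\floor{s}=0$, so the seminorm in \eqref{Wsp_Poincare} reads $|v|_{W^{s,p}(O)}^p = \int_O\!\int_O \|x-y\|_{\ell^2}^{-(sp+d)}|v(x)-v(y)|^p\dif x\dif y$, and no differentiation of $v$ is involved; the argument is therefore entirely self-contained.

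First I would write, for a.e.\ $x\in O$, using only $\underline{v}_O = \mes{O}^{-1}\int_O v\dif y$,
\[
v(x)-\underline{v}_O = \frac{1}{\mes{O}}\int_O \big(v(x)-v(y)\big)\dif y.
\]
Raising to the power $p$ and applying Jensen's inequality (valid since $t\mapsto|t|^p$ is convex for $p\ge1$) against the probability measure $\mes{O}^{-1}\dif y$ gives
\[
|v(x)-\underline{v}_O|^p \le \frac{1}{\mes{O}}\int_O |v(x)-v(y)|^p\dif y.
\]
Integrating over $x\in O$ and using Tonelli's theorem yields
\[
\|v-\underline{v}_O\|_{L^p(O)}^p \le \frac{1}{\mes{O}}\int_O\!\int_O |v(x)-v(y)|^p\dif x\dif y.
\]

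The second step connects this unweighted double integral to the weighted seminorm. Every pair $x,y\in O$ satisfies $\|x-y\|_{\ell^2}\le h_O$, and since the exponent $sp+d$ is positive the kernel obeys $\|x-y\|_{\ell^2}^{-(sp+d)}\ge h_O^{-(sp+d)}$; hence
\[
\int_O\!\int_O |v(x)-v(y)|^p\dif x\dif y \le h_O^{sp+d}\int_O\!\int_O \frac{|v(x)-v(y)|^p}{\|x-y\|_{\ell^2}^{sp+d}}\dif x\dif y = h_O^{sp+d}\,|v|_{W^{s,p}(O)}^p.
\]
Combining the two displays and taking the $p$-th root produces
\[
\|v-\underline{v}_O\|_{L^p(O)} \le \Big(\frac{h_O^{sp+d}}{\mes{O}}\Big)^{\frac1p} |v|_{W^{s,p}(O)} = h_O^{s}\Big(\frac{h_O^d}{\mes{O}}\Big)^{\frac1p}|v|_{W^{s,p}(O)},
\]
which is exactly \eqref{Wsp_Poincare}.

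I do not expect a genuine obstacle in this argument; it is short and direct. The only two points that require a moment of care are that the convexity needed for Jensen holds precisely because $p\ge1$ (so the restriction to $p\in[1,\infty)$ is natural), and that the pointwise bound on the kernel goes in the correct direction only because $sp+d>0$, which is what lets the crude replacement of $\|x-y\|_{\ell^2}$ by $h_O$ furnish a \emph{lower} bound on the kernel and hence an \emph{upper} bound on the error. The explicit tracking of $\mes{O}$ and $h_O$ is what makes the constant uniform with respect to $O$, which is precisely the feature exploited in Lemma~\ref{lem:pol_app_DK}.
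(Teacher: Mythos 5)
Your proof is correct and follows essentially the same route as the paper's: both start from the identity $v(\bx)-\underline{v}_O=\mes{O}^{-1}\int_O(v(\bx)-v(\by))\dif y$ and conclude by replacing $\|\bx-\by\|_{\ell^2}$ with $h_O$, arriving at the identical constant $h_O^{s}(h_O^d/\mes{O})^{1/p}$. The only cosmetic difference is that you invoke Jensen's inequality against the probability measure $\mes{O}^{-1}\dif y$ and bound the Gagliardo kernel afterwards, whereas the paper inserts the kernel weight first and applies H\"older with exponents $p$ and $p'$ --- the same computation organized in the opposite order (and yours handles $p=1$ slightly more transparently).
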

\begin{proof}
  This result is essentially Proposition~6.1 from
  \cite{Dupont_Scott_1980}, see also~\cite{Heuer:14}.  We nevertheless give a proof since the
  computation gives us the constant in the right-hand side of \eqref{Wsp_Poincare},
  and this in turn allows us to apply the result locally on shape-regular
  meshes. Using the definitions, we have
\begin{align*}
\int_{O} |v(\bx) - \underline{v}_{O}|^p\!\dif x 
&= \int_{O} \mes{O}^{-p}\left|\int_{O} (v(\bx) - v(\by))\dif y\right|^p\!\!\!\dif x \\
&\le \int_O \mes{O}^{-p}\left(\int_{O} 
\frac{|v(\bx) - v(\by)|}{\|\bx-\by\|_{\ell^2}^{s+\frac{d}{p}}}\|\bx-\by\|_{\ell^2}^{s+\frac{d}{p}}
\dif y\right)^p\dif x  \\
& \le \int_O \mes{O}^{-p} \int_{O} 
\frac{|v(\bx) - v(\by)|^p}{\|\bx-\by\|_{\ell^2}^{sp+d}} \dif y
\left(\int_O\|\bx-\by\|_{\ell^2}^{(s+\frac{d}{p})p'}\dif y\right)^\frac{p}{p'}\!\!\!\dif x,
\end{align*}
where $p':=\frac{p}{p-1}$. Then  using that
$\|\bx-\by\|_{\ell^2}\le h_O$ for all $\bx,\by\in O$, we infer that
\begin{align*}
\|v - \underline{v}_{O}\|_{L^p(O)}^p
&\le \int_O \mes{O}^{-p} \int_O 
\frac{|v(\bx) - v(\by)|^p}{\|\bx-\by\|_{\ell^2}^{sp+d}} \dif y\dif x
\left(\max_{\bx\in O}\int_O\|\bx-\by\|_{\ell^2}^{(s+\frac{d}{p})p'}\dif y\right)^\frac{p}{p'}\\
& \le |v|_{W^{s,p}(O)}^p \mes{O}^{-p} 
\left(\int_O h_O^{(s+\frac{d}{p})p'}\dif y\right)^\frac{p}{p'}\\
&\le  |v|_{W^{s,p}(O)}^p  \mes{O}^{-p} \mes{O}^{\frac{p}{p'}} 
h_O^{sp+d}  \le  |v|_{W^{s,p}(O)}^p h_O^{sp+d} \mes{O}^{-1}.
\end{align*}
Hence $\|v - \underline{v}_{O}\|_{L^p(O)} \le h_O^{s}
\left(\frac{h_O^d}{\mes{O}}\right)^{\frac{1}{p}} |v|_{W^{s,p}(O)}$.
\end{proof}

\begin{lemma}[Trace inequality] 
  \label{Lem:trace_inequality_in_Wsp} Assume $s\in(0,1)$ and
    $sp>1$ with $p\in[1,\infty)$ or $s=1$ with $p\in [1,\infty]$.  There exists $c$, uniform with
  respect to the mesh sequence but depending on $|sp-1|$ if $s\in (0,1)$, such that
  the following holds for all $v\in W^{s,p}(K)$ and all
  $K\in\calT_h$: \begin{equation} \|v\|_{L^p(F)} \le
    c(h_K^{-\frac{1}{p}}\|v\|_{L^p(K)} +
    h_K^{s-\frac{1}{p}}|v|_{W^{s,p}(K)}). \label{Eq:Lem:trace_inequality_in_Wsp}
\end{equation} 
\end{lemma}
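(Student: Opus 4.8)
The plan is to reduce the estimate to a scale-invariant inequality on the reference element $\wK$ and then to prove the latter with a constant that is tracked explicitly in $sp-1$. Given a cell $K$ and a face $F\subset\partial K$, let $\widehat F:=\trans_K^{-1}(F)$ be the corresponding face of $\wK$ and set $\wv:=v\circ\trans_K$. A change of variables using \eqref{Eq2:propJK}, together with the transformation rules for the bulk $L^p$-norm, the surface $L^p$-norm, and the Sobolev--Slobodeckij seminorm under the affine pullback $\trans_K$, gives the relations $\|\wv\|_{L^p(\widehat F)}^p\approx h_K^{-(d-1)}\|v\|_{L^p(F)}^p$, $\|\wv\|_{L^p(\wK)}^p\approx h_K^{-d}\|v\|_{L^p(K)}^p$, and $|\wv|_{W^{s,p}(\wK)}^p\approx h_K^{sp-d}|v|_{W^{s,p}(K)}^p$. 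Reinserting these shows that \eqref{Eq:Lem:trace_inequality_in_Wsp} is equivalent to
\[
\|\wv\|_{L^p(\widehat F)}^p \le c\left(\|\wv\|_{L^p(\wK)}^p + |\wv|_{W^{s,p}(\wK)}^p\right),
\]
so it suffices to establish this reference inequality while tracking how $c$ depends on $sp-1$. I would split two cases. For $s=1$ (and $p\in[1,\infty]$) this is the classical $W^{1,p}$-trace estimate on the fixed Lipschitz domain $\wK$, whose constant is absolute, and I would simply invoke it. For $s\in(0,1)$ the hypothesis $sp>1$ forces $p>1$ (since $sp=s<1$ when $p=1$), so throughout the fractional case the conjugate exponent $p'$ is finite.

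For the fractional case I would use a dyadic chain of balls shrinking to a face point. Since $\wK$ is convex with nonempty interior, there is a fixed interior-pointing cone $\Gamma$ of height $\sim 1$ with $\wbx+\Gamma\subset\wK$ for every $\wbx\in\widehat F$. Inside $\wbx+\Gamma$ I place balls $B_k(\wbx)\subset\wK$ of diameter $\sim 2^{-k}$ located at distance $\sim 2^{-k}$ from $\wbx$, with $B_k,B_{k+1}$ both contained in an enclosing ball $\widetilde B_k(\wbx)$ of radius $\sim 2^{-k}$. At almost every $\wbx\in\widehat F$ (a Lebesgue point of the trace) one has $\underline{\wv}_{B_k(\wbx)}\to\wv(\wbx)$, whence the telescoping representation
\[
\wv(\wbx) = \underline{\wv}_{B_0(\wbx)} + \sum_{k\ge 0}\left(\underline{\wv}_{B_{k+1}(\wbx)}-\underline{\wv}_{B_k(\wbx)}\right).
\]
Applying the fractional Poincar\'e inequality of Lemma~\ref{Lem:fracional_approx_in_LP} on $O=\widetilde B_k(\wbx)$, for which $h_O^d/\mes{O}\sim 1$, and averaging, each increment is bounded by $|\underline{\wv}_{B_{k+1}(\wbx)}-\underline{\wv}_{B_k(\wbx)}|\le c\,2^{-k(s-\frac dp)}\,|\wv|_{W^{s,p}(\widetilde B_k(\wbx))}$, while $|\underline{\wv}_{B_0(\wbx)}|\le c\,\|\wv\|_{L^p(B_0(\wbx))}$.

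I would then raise $|\wv(\wbx)|$ to the power $p$, integrate over $\wbx\in\widehat F$, and sum in $k$ by a weighted Hölder inequality with weight $2^{-k\delta}$ and $\delta:=\frac{sp-1}{p-1}>0$, so that $\sum_k 2^{-k\delta}=(1-2^{-\delta})^{-1}\sim\delta^{-1}\sim\frac{p-1}{sp-1}$. The surviving sum has the form $\sum_k 2^{-k[\,sp-1-\delta(p-1)\,]}\int_{\widehat F}|\wv|_{W^{s,p}(\widetilde B_k(\wbx))}^p\,\dif\wbx$, where the bracket vanishes by the choice of $\delta$. The decisive step is then a Fubini/bounded-overlap count: swapping integrals, a pair $(\wby,\wby')$ lies in $\widetilde B_k(\wbx)$ only for $\wbx$ in a $(d{-}1)$-ball of measure $\lesssim 2^{-k(d-1)}$ and only when both points sit in the dyadic layer at distance $\sim 2^{-k}$ from $\widehat F$ with $\|\wby-\wby'\|\lesssim 2^{-k}$; since these layers have bounded overlap one gets $\int_{\widehat F}|\wv|_{W^{s,p}(\widetilde B_k(\wbx))}^p\dif\wbx\lesssim 2^{-k(d-1)}T_k$ with $\sum_k T_k\lesssim|\wv|_{W^{s,p}(\wK)}^p$. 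Collecting the geometric prefactor $\bigl(1-2^{-\delta}\bigr)^{-(p-1)}$ yields the blow-up of $c$ as $sp\to 1^+$, and the $B_0$-term together with the same overlap count produces the $\|\wv\|_{L^p(\wK)}^p$ contribution; rescaling via the first paragraph gives \eqref{Eq:Lem:trace_inequality_in_Wsp}.

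I expect the main obstacle to be precisely this last bookkeeping: organizing the balls into dyadic layers with controlled overlap and carrying out the Fubini count so that the face-indexed local seminorms reassemble into the single global seminorm $|\wv|_{W^{s,p}(\wK)}^p$ with the sharp power of $2^{-k}$. Getting these powers right is what makes the weight $\delta=(sp-1)/(p-1)$ the correct choice and is the source of the $|sp-1|$-dependence of the constant; by contrast the cone geometry, the telescoping identity, and the affine scaling are routine.
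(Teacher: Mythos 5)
Your outer reduction is exactly the paper's: pull back to the reference cell by the affine map $\trans_K$, use a trace inequality on $\wK$, and rescale, with the three scaling relations you list matching the change of variables the paper performs via $\mes{F}$, $\mes{K}$ and $\|\Jac_K\|_{\ell^2}$. Where you genuinely diverge is that the paper stops there: it simply \emph{cites} the trace inequality on the fixed Lipschitz domain $\wK$ (for $s\in(0,1)$, $sp>1$, with a constant $c_{s,p}$ depending on $|sp-1|$) as a known result, so its entire proof is the one-line affine scaling argument. You instead supply a self-contained proof of that reference inequality by the cone/dyadic-averaging method, telescoping cell averages along balls $B_k(\wbx)$ shrinking to a face point and controlling each increment with the paper's own fractional Poincar\'e inequality (Lemma~\ref{Lem:fracional_approx_in_LP}), then summing with the weight $\delta=(sp-1)/(p-1)$ and a Fubini/bounded-overlap count. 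This buys an explicit constant $(1-2^{-\delta})^{-(p-1)}\sim((sp-1)/(p-1))^{-(p-1)}$, which makes the $|sp-1|$-dependence asserted in the lemma transparent rather than inherited from a citation; the price is the extra bookkeeping you identify. Two small points to tidy up: (i) the exponent produced pointwise by the weighted H\"older step is $2^{-k(sp-d-\delta(p-1))}$ (consistent with your increment bound $2^{-k(s-\frac dp)}$ raised to the power $p$), and it is only after multiplying by the $2^{-k(d-1)}$ from the face-measure count that the net exponent $sp-1-\delta(p-1)$ vanishes --- as written you state the vanishing bracket \emph{and} the $2^{-k(d-1)}$ gain separately, which double-counts that factor (harmlessly, since it only makes the bound smaller); (ii) the identification of the Lebesgue-point limits $\underline{\wv}_{B_k(\wbx)}\to\wv(\wbx)$ with the trace should be justified by first proving the estimate for $\wv\in C^\infty(\overline{\wK})$ and concluding by density. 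Neither issue affects the validity of the argument.
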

\begin{proof}
  Let $v\in W^{s,p}(\Dom)$. Let
  $K\in \calT_h$ be a cell and $F$ be a face of $K$.  
  Since the map $\trans_K$ is affine,
using a trace inequality in $W^{s,p}(\wK)$ (recall that $s\in(0,1)$ and
    $sp>1$ or $s=1$ and $p\ge 1$),
we infer that
\[
\|v\|_{L^p(F)} = \frac{\mes{F}^{\frac1p}}{\mes{\wF}^{\frac1p}}\|\psi_K\upg(v)\|_{L^p(\wF)}
\le c_{s,p} \mes{F}^{\frac1p}(\|\psi_K\upg(v)\|_{L^p(\wK)} + |\psi_K\upg(v)|_{W^{s,p}(\wK)}),
\]
where $c_{s,p}$ depends on $|sp-1|$ if $s\in (0,1)$.
Upon changing variables, this inequality is re-written
\[
\|v\|_{L^p(F)} 
\le c_{s,p} \mes{F}^{\frac{1}{p}} \mes{K}^{-\frac1p}(\|v\|_{L^p(K)} + \|\polJ_K\|_{\ell^2}^{-s} |v|_{W^{s,p}(K)}).
\]
The conclusion follows from the shape-regularity of the mesh sequence,
see \eqref{Eq2:propJK}.
\end{proof}

\bibliographystyle{abbrvnat} 
\bibliography{ref_quasi_interp}

\end{document}